\documentclass[10pt]{amsart}
\usepackage{amsmath,amssymb,amsthm,needspace}
\usepackage[hidelinks]{hyperref}
\hypersetup{pdftitle={When Hilbert functions determine generic initial ideals},pdfauthor={Van Duc Trung}}
\numberwithin{equation}{section}
\newtheorem{theorem}{Theorem}[section]
\newtheorem{introtheorem}{Theorem}

\newtheorem{lemma}[theorem]{Lemma}
\newtheorem{proposition}[theorem]{Proposition}
\newtheorem{corollary}[theorem]{Corollary}
\theoremstyle{definition}

\theoremstyle{remark}
\newtheorem{remark}[theorem]{Remark}
\newtheorem{example}[theorem]{Example}
\newcommand{\gin}{\operatorname{gin}}
\newcommand{\inin}{\operatorname{in}}
\newcommand{\rank}{\operatorname{rank}}
\newcommand{\supp}{\operatorname{supp}}
\newcommand{\B}{\mathcal B}
\newcommand{\vct}{\mathbf v}
\newcommand{\Vct}{\mathbf V}
\newcommand{\Z}{\mathbb Z}
\title[Hilbert functions and uniqueness of generic initial ideals]{When Hilbert functions determine generic initial ideals}
\author{Van Duc Trung}
\address{Department of Mathematics, University of Education, Hue University,
34 Le Loi, Hue, Vietnam}
\email{vdtrung@hueuni.edu.vn}
\date{}
\subjclass[2020]{Primary 13P10; Secondary 13D40, 13E10}
\keywords{generic initial ideal, Hilbert function, strong Lefschetz property, strongly stable ideal, uniqueness}
\begin{document}
\begin{abstract}
Over a field of characteristic zero, we characterize the Hilbert
functions that determine the degree reverse lexicographic generic
initial ideal among Artinian quotients of embedding dimension $n\ge2$ with the
$(n-2)$-strong Lefschetz property. The criterion is a finite arithmetic
test on successive positive differences of the Hilbert function.
It uses recursively defined integer vectors and requires no
construction of a monomial ideal. When uniqueness holds, the same
vectors recover the common generic initial ideal degree by degree.
The proof reduces uniqueness
successively to uniqueness of lower sets of specified cardinalities
in Borel orders. In four variables, the test simplifies to avoidance
of explicitly computed integer intervals. We recover the
quasi-symmetry criterion of Harima and Wachi and give infinite
families beyond that criterion in every number of variables at
least four.
\end{abstract}
\maketitle
\raggedbottom
\enlargethispage{4pt}

\section{Introduction}\label{sec:intro}

Let $R_n=K[x_1,\ldots,x_n]$ be standard graded over a field $K$ of
characteristic zero. We use degree reverse lexicographic order with
$x_1>\cdots>x_n$. For a homogeneous ideal $Q\subset R_n$, write
$\inin(Q)$ for its initial ideal and call the monomials outside
$\inin(Q)$ the \emph{standard monomials} of $Q$. The \emph{generic
initial ideal} of $Q$ is $\gin(Q)=\inin(gQ)$ for a general linear
change of coordinates $g$. The Gr\"obner degeneration of $gQ$ gives
a flat family over the affine line with fiber $R_n/\gin(Q)$ at zero
and fibers isomorphic to $R_n/Q$ at all nonzero scalars
\cite[Theorem~3.1.2 and Corollary~3.2.6]{HerzogHibi}.

Fix a class of homogeneous ideals in $R_n$ with Artinian quotients.
For each Hilbert function $h$ occurring among these quotients, let
$\mathcal{I}_h$ consist of all ideals $Q$ in the class for which $R_n/Q$
has Hilbert function $h$.
For $Q\in\mathcal{I}_h$ we have
\[
 h_q=\dim_K(R_n/Q)_q
    =\dim_K(R_n/\inin(Q))_q
    =\dim_K(R_n/\gin(Q))_q \qquad(q\ge0).
\]
Thus $h_q$ gives the number of degree-$q$ monomials outside $\gin(Q)$,
but the monomials themselves may depend on $Q$. If $\gin(Q)=J$ for every
$Q\in\mathcal{I}_h$, then each corresponding quotient $R_n/Q$ admits
a flat degeneration to $R_n/J$. It is therefore natural to ask for which Hilbert
functions $h$ all ideals in $\mathcal{I}_h$ have the same
\mbox{generic initial ideal}.

For an Artinian standard graded $K$-algebra $A$, a linear form
$\ell\in A_1$ is a \emph{strong Lefschetz element} if
$\ell^e:A_q\to A_{q+e}$ has rank $\min\{\dim_K A_q,\dim_K A_{q+e}\}$
for all $q\ge0$ and $e\ge1$. Its existence is the \emph{strong
Lefschetz property} (SLP), and $A/\ell A$ is the corresponding
Lefschetz quotient. For $k\ge1$, we say that $A$ has \emph{$k$-SLP}
if there are $\ell_1,\ldots,\ell_k\in A_1$ such that the image of
$\ell_i$ is strong Lefschetz on $A/(\ell_1,\ldots,\ell_{i-1})A$
for each $i$. The condition $0$-SLP is empty.

We consider all Artinian quotients
of embedding dimension $n\ge2$ with \mbox{$(n-2)$-SLP}. In the quotient
by the gin, the last variables then act successively as strong
Lefschetz elements (Lemma~\ref{lem:ginclass}). Maximal~rank therefore
prescribes how many standard monomials in each degree remain standard
after multiplication by a given power of the corresponding variable.

Harima and Wachi \cite[Theorem~1]{HarimaWachi} prove that for an
Artinian quotient $R_3/Q$ with SLP, its Hilbert function determines
$\gin(Q)$. They identify the common gin as the \emph{almost reverse
lexicographic} (or \emph{almost revlex}) ideal for that function.
A monomial ideal is almost revlex if, for every minimal generator
$u$, it contains each monomial $v>u$ with $\deg v=\deg u$. At most one such ideal has a given quotient Hilbert
function \cite[Remark~11(i)]{HarimaWachi}.

Their higher-dimensional result uses the positive differences of $h$.
We omit trailing zeros from finite sequences. For $g=(g_0,\ldots,g_e)$,
set $g_q=0$ outside $0\le q\le e$. Write $(a)_+=\max\{a,0\}$ and
define $(\Delta g)_q=(g_q-g_{q-1})_+$, $\Delta^0g=g$, and
$\Delta^{j+1}g=\Delta(\Delta^jg)$ for $j\ge0$. A sequence is
\emph{unimodal} if it is nondecreasing up to a maximum and nonincreasing
thereafter. Let $s$ be the first index at which $g$ attains its
maximum. We call $g$ \emph{quasi-symmetric} if it is unimodal and
$g_t\in\{g_0,\ldots,g_s\}$ for every $s<t\le e$.
For $n\ge4$, Harima and Wachi \cite[Theorem~2]{HarimaWachi} show
that under $n$-SLP the gin is the almost revlex ideal with quotient
Hilbert function $h$ whenever $\Delta^j h$ is quasi-symmetric for
every $0\le j\le n-4$.
Here $n$-SLP and $(n-2)$-SLP are equivalent because every Artinian
quotient in at most two variables has SLP in characteristic zero
\cite[Proposition~4.4]{HarimaMiglioreNagelWatanabe}. Even without
quasi-symmetry, an almost revlex ideal exists for every Hilbert
function in this class \cite[Proposition~24]{HarimaWachi}. Harima and
Wachi also give a four-variable example in which two distinct gins
occur for $h=(1,4,8,7)$ \cite[Example~21(i)]{HarimaWachiNagoya}.

We decide from $h$ alone whether $\gin(Q)$ equals the almost revlex
ideal for every Artinian quotient $R_n/Q$ with Hilbert function $h$
and $(n-2)$-SLP. In every embedding dimension $n\ge2$, our criterion consists of
finitely many coordinatewise comparisons of integer vectors computed
from $h$ and its positive differences.
For example, the Hilbert functions $(1,5,15,7)$ and $(1,5,15,8)$
have the same positive difference $(1,4,10)$, and neither is
quasi-symmetric. Among five-variable quotients with $3$-SLP, exactly
one gin occurs for the first Hilbert function and exactly two occur
for the second (Example~\ref{ex:five}).
The proof describes which multiples of standard monomials
by powers of the last variable can remain standard
(Proposition~\ref{prop:fiber}) and determines when there is only one
such choice.

To state our main theorem, we first make precise what it means for a
Hilbert function to determine the gin.
Fix $n\ge2$ and let $h=(1,n,h_2,\ldots,h_\delta)$ be a finite
sequence of positive integers. We call $h$ \emph{admissible in
$n$ variables} if there is a homogeneous ideal $Q\subset R_n$
such that $R_n/Q$ is Artinian with Hilbert function $h$ and
$(n-2)$-SLP. We say that an admissible $h$ has the
\emph{uniqueness property in $n$ variables} if all such ideals $Q$
have the same generic initial ideal.

For any finite sequence $h$ of positive integers with $h_0=1$ and
$h_1=n$, set
\begin{equation}\label{eq:iterates}
 h^{[0]}=h,\qquad h^{[j+1]}=\Delta h^{[j]}
 \quad(0\le j<n-2).
\end{equation}

Assume now that $h$ is admissible in $n$ variables. If $R_n/Q$ has
Hilbert function $h$, then $h_1=\dim_K(R_n/Q)_1=n-\dim_K Q_1$.
Thus $h_1=n$ gives $Q_1=0$, so the presentation is minimal and the
quotient has embedding dimension $n$.
In a presentation $R_r/I$ with $r>n$, one can eliminate the $r-n$
independent linear relations. In general coordinates the leading
monomials of $I_1$ are $x_1,\ldots,x_{r-n}$, the degree-one minimal
generators of $\gin(I)$.
Proposition~\ref{prop:admissibility} gives a numerical test for
admissibility when $n\ge3$.

Choose an Artinian quotient $A=R_n/Q$ with Hilbert function $h$ and
forms $\ell_1,\ldots,\ell_{n-2}\in A_1$ realizing $(n-2)$-SLP.
Passing to the quotient by a strong Lefschetz element replaces the
Hilbert function $g$ by $\Delta g$, since multiplication by that
element has maximal rank between consecutive degrees. Applying this at
each step shows that $A/(\ell_1,\ldots,\ell_j)A$ has Hilbert function
$h^{[j]}$ for $0\le j\le n-2$, with $A$ itself at $j=0$. In
particular, $h^{[j]}_0=1$ and $h^{[j]}_1=n-j$. We therefore use
$n-j$ variables for $h^{[j]}$, with two variables at the final step
$j=n-2$.

For a finite numerical sequence $g$, write
$\supp g=\{q\ge0:g_q\ne0\}$ and $\max g=\max_{q\ge0}g_q$. Set
\begin{equation}\label{eq:supports}
 \delta_j=\max\supp h^{[j]}\qquad(0\le j\le n-2).
\end{equation}
For $0\le j<n-2$, the Hilbert function $h^{[j+1]}$ has support
$\{0,\ldots,\delta_{j+1}\}$. Since $h^{[j+1]}=\Delta h^{[j]}$,
the sequence $h^{[j]}$ is strictly increasing through degree
$\delta_{j+1}$ and nonincreasing thereafter.
For each such $j$ and all $q,t$ with $0\le q\le\delta_{j+1}$ and
$t\ge q$, put
\begin{equation}\label{eq:counts}
 \rho_{j,q}(t)=\min\{h^{[j]}_q,h^{[j]}_t\}
             -\min\{h^{[j]}_{q-1},h^{[j]}_t\}.
\end{equation}
Thus $0\le \rho_{j,q}(t)\le h^{[j+1]}_q$, with equality on the
right when $q\le t\le\delta_{j+1}$.
Let $M\subset R_{n-j}$ be a monomial ideal whose quotient has Hilbert
function $h^{[j]}$ and strong Lefschetz element $x_{n-j}$.
There are $h^{[j+1]}_q$ degree-$q$ monomials $u\in R_{n-j-1}$
outside $M$. Exactly $\rho_{j,q}(t)$ of them satisfy
$u x_{n-j}^{t-q}\notin M$ (Lemma~\ref{lem:counts}).

To decide whether $\rho_{j,q}(t)$ determines which of the monomials
$u x_{n-j}^{t-q}$ are standard, we compare partial sums of exponents.
The recursion below computes these sums for the degree-$q$ standard
monomials of the almost revlex ideal with quotient Hilbert function
$h^{[j+1]}$ using only the sequences in \eqref{eq:iterates}.
For $1\le\nu\le\max h^{[n-2]}$, set
\begin{equation}\label{eq:vectorbase}
 \vct_{n-2}(\nu)=(\nu-1).
\end{equation}
Take the indices $0\le j\le n-3$ in decreasing order.
For $1\le\nu\le\max h^{[j]}$, let
\begin{equation}\label{eq:threshold}
 s_j(\nu)=\min\{r\ge0:h^{[j]}_r\ge\nu\},\qquad
 \lambda_j(\nu)=\nu-h^{[j]}_{s_j(\nu)-1},
\end{equation}
and define, by concatenation,
\begin{equation}\label{eq:vector}
 \vct_j(\nu)
   =\bigl(\vct_{j+1}(\lambda_j(\nu)),\ s_j(\nu)\bigr)
   \in\Z_{\ge0}^{n-j-1}.
\end{equation}
The recursion is well-defined because the minimality of $s_j(\nu)$ gives
\[
 1\le\lambda_j(\nu)
 \le h^{[j]}_{s_j(\nu)}-h^{[j]}_{s_j(\nu)-1}
 =h^{[j+1]}_{s_j(\nu)}.
\]
Maxima, minima, and inequalities between vectors are taken
coordinatewise.

\Needspace{10\baselineskip}
Our main result is the following numerical criterion.

\begin{introtheorem}\label{thm:main}
Let $h$ be admissible in $n$ variables. The following are equivalent.
\begin{enumerate}
\item $h$ has the uniqueness property in $n$ variables.
\item For all integers $j,q,t$ with $0\le j\le n-4$ and
$0\le q\le\delta_{j+1}<t\le\delta_j$, put $N=h^{[j+1]}_q$ and
$k=\rho_{j,q}(t)$. Whenever $0<k<N$, one has
\begin{equation}\label{eq:test}
 \max_{1\le\nu\le k}\vct_{j+1}(\nu)
 \ \le\
 \min_{k<\nu\le N}\vct_{j+1}(\nu).
\end{equation}
\end{enumerate}
\smallskip
For $n\le3$, condition~\textup{(2)} is empty, and uniqueness always
holds under the stated hypotheses.
\end{introtheorem}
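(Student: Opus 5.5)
We prove the theorem by induction on $n$, peeling off the last variable. We use the structure of $\gin(Q)$ described in the introduction and made precise in Lemma~\ref{lem:ginclass}, Lemma~\ref{lem:counts} and Proposition~\ref{prop:fiber}. Set $J=\gin(Q)$. Then $J$ is strongly stable and $x_n$ is a strong Lefschetz element on $R_n/J$. Moreover $(J:x_n^\infty)$, restricted to $R_{n-1}$ (equivalently, the image of $J$ modulo $x_n$), is the gin of the Lefschetz quotient. That quotient has Hilbert function $h^{[1]}$ and $(n-3)$-SLP.

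Conversely, $J$ is determined by two pieces of data. The first is the ideal $J'\subset R_{n-1}$ with quotient Hilbert function $h^{[1]}$. The second is, for each $q\le\delta_1$ and $t>q$, the set $S_{q}(t)$ of degree-$q$ standard monomials $u$ of $J'$ with $u x_n^{t-q}\notin J$. By Lemma~\ref{lem:counts}, $|S_q(t)|=\rho_{0,q}(t)$. For $t\le\delta_1$ every such $u$ survives, so only the range $\delta_1<t\le\delta_0$ carries information.

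Proposition~\ref{prop:fiber} should say that each $S_q(t)$ is a lower set, of the prescribed cardinality, in the Borel order on the standard monomials of $J'$ in degree $q$. It should also say that every compatible choice of $J'$ and lower sets is realized as a gin. The realization would come from a monomial quotient with the required SLP, perhaps built via Proposition~\ref{prop:admissibility}. Given this, uniqueness for $h$ is equivalent to two conditions:
\begin{enumerate}
\item uniqueness for $h^{[1]}$ in $n-1$ variables;
\item for each relevant $(q,t)$, a unique Borel lower set of size $k=\rho_{0,q}(t)$ in the poset $P_q$ of the $N=h^{[1]}_q$ standard monomials.
\end{enumerate}
Under inductive uniqueness, $J'$ is the almost revlex ideal. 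We also need to know that lower sets for different $t$ are automatically nested, so that no further interaction arises.

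The key combinatorial step identifies $P_q$ with the vectors $\vct_1(\nu)$. We show by downward induction on $j$ that $\vct_{j+1}(\nu)$, for $1\le\nu\le N$, lists the partial exponent sums of the degree-$q$ standard monomials of the almost revlex ideal with Hilbert function $h^{[j+1]}$. These are ordered so that $\nu$ increases along revlex and the Borel order corresponds to the coordinatewise order on partial sums. The recursion \eqref{eq:vector} reflects how the almost revlex ideal is assembled fibrewise over powers of the last variable: $s_j(\nu)$ is the exponent sum through that variable, and $\lambda_j(\nu)$ indexes the monomial in the fibre. A finite poset with a linear extension $\nu\mapsto u_\nu$ has a unique lower set of size $k$ exactly when $\{u_1,\dots,u_k\}$ is a lower set and no other one exists. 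Revlex order being a linear extension forces the only candidate to be the initial segment, which is why the almost revlex ideal always occurs. Uniqueness then holds iff no element $u_\nu$ with $\nu\le k$ is incomparable with, or fails to lie below, some $u_\mu$ with $\mu>k$ in a way that permits swapping. We expect this to be equivalent to \eqref{eq:test}: every element of the initial segment lies below every element of the complement, that is, $\max_{\nu\le k}\le\min_{\nu>k}$. Indeed, if some $u_\nu$, $\nu\le k$, is not below some $u_\mu$, $\mu>k$, take $\nu$ maximal in the segment and $\mu$ minimal in the complement among such pairs; then swapping them should yield a second lower set.

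Unwinding the induction gives the conditions for $j=0,\dots,n-3$. For $j=n-3$ the poset consists of monomials in two variables of fixed degree, which form a chain. So every lower set is unique, the conditions are vacuous there, and this explains the bound $j\le n-4$ and the statement for $n\le3$.

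The main obstacle is the step in which comparability in $P_q$ must be genuine Borel comparability rather than mere comparability of partial-sum vectors. Related to it is proving that the swapped set is again a lower set: we must choose the pair minimal so that nothing else is forced. We would first try to show that on degree-$q$ standard monomials of an almost revlex ideal, the Borel order coincides with the dominance order on partial sums, using that these monomials form a Borel-closed upward set.
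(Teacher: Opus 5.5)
Your proposal is correct and follows the paper's proof. It inducts on $n$ through the surjective section map $M\mapsto M\cap R_{n-1}$, uses Proposition~\ref{prop:fiber} to reduce uniqueness over a fixed section to uniqueness of Borel lower sets of size $\rho_{0,q}(t)$, reads off the ordered poset $E(J)_q$ of the fibrewise construction $C_{n-1}(\Delta h)$ as $\vct_1(1),\ldots,\vct_1(N)$ via Lemma~\ref{lem:canonical}, and applies Lemma~\ref{lem:poset}, whose swap argument you sketch correctly, to get the max/min test. Some of your worries are unnecessary and two statements need correcting: you need no almost revlex property, Step~4 of Proposition~\ref{prop:fiber} already extends any lower set to a nested family (so no ``automatic nesting'' is needed), and the ``main obstacle'' disappears because \eqref{eq:borel} identifies the Borel order with coordinatewise comparison of partial-sum vectors for any two monomials of the same degree; standard monomials of a strongly stable ideal form a lower set, not an upward one, and the ideal to use is the section $J\cap R_{n-1}$, not $(J:x_n^\infty)$, which is the unit ideal for Artinian $J$.
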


When the inequalities hold, the vectors $\vct_0(\nu)$ give the
standard monomials of the common gin in every degree. This gin is the
unique almost revlex ideal with quotient Hilbert function $h$
(Proposition~\ref{prop:reconstruction}). If an inequality fails,
the proof constructs two distinct monomial ideals, each equal to
its own gin, whose quotients have Hilbert function $h$ and $(n-2)$-SLP.

For $n\ge4$ and $h$ quasi-symmetric, Corollary~\ref{cor:reduction}
reduces uniqueness for $h$ in $n$ variables to uniqueness for
$\Delta h$ in $n-1$ variables. Iterating gives the sufficient condition
of Harima and Wachi. For every $n\ge4$, Proposition~\ref{prop:families}
gives infinitely many admissible Hilbert functions $h$ with the
uniqueness property in $n$ variables for which $\Delta^j h$ is
symmetric for every $0\le j<n-4$, yet $\Delta^{n-4}h$ is not
quasi-symmetric.

\Needspace{10\baselineskip}
\section{Sections of strongly stable ideals and strong Lefschetz elements}\label{sec:lifts}

In this section $n\ge2$, and we use the natural inclusion
$R_{n-1}\subset R_n$. For a monomial ideal $M\subset R_n$, we call
$M\cap R_{n-1}$ the \emph{section} of $M$. We call $M$ \emph{strongly stable}
if $x_i u/x_j\in M$ whenever $u\in M$ is a monomial, $x_j\mid u$,
and $1\le i<j\le n$. The standard monomials of a strongly stable
ideal are closed under taking divisors and under replacing a variable
by one of higher index.
A subscript on $\inin$, as in $\inin_{R_n}(Q)$, specifies the
ambient ring. For the basic properties of generic initial ideals,
see \cite[Theorem~4.1.2 and Proposition~4.2.6]{HerzogHibi}.

\subsection{Generic initial ideals and successive Lefschetz elements}
For an Artinian quotient $R_n/Q$, Ahn, Cho, and Park
\cite[Theorem~3.7]{AhnChoPark} characterize SLP in terms of the
minimal generators of $\gin(Q)$. To keep track of successive
Lefschetz quotients, we use the last variables as Lefschetz
elements after passing to the gin.

\begin{lemma}\label{lem:compat}
For a homogeneous ideal $Q\subset R_n$ and $e\ge1$,
\[
 \inin(Q+(x_n^e))=\inin(Q)+(x_n^e).
\]
Consequently, multiplication by $x_n^e$ on $R_n/Q$ and on
$R_n/\inin(Q)$ has the same rank in every degree.
For $e=1$, write $\overline Q=(Q+(x_n))/(x_n)$ and identify
$R_n/(x_n)$ with $R_{n-1}$. Then
\begin{equation}\label{eq:section-compat}
 \inin_{R_{n-1}}(\overline Q)=\inin_{R_n}(Q)\cap R_{n-1}.
\end{equation}
\end{lemma}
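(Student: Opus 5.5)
The plan is to use the standard property of degree reverse lexicographic order with respect to the last variable: if $f$ is homogeneous and $x_n\mid\inin(f)$, then $x_n\mid f$. More generally, write $f=x_n^a f'$ with $x_n\nmid f'$; then $\inin(f)=x_n^a\inin(f')$ and $x_n\nmid\inin(f')$. Also, for homogeneous $f$, $\inin(f)\in(x_n^e)$ if and only if $f\in(x_n^e)$.

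\emph{Step 1: the identity $\inin(Q+(x_n^e))=\inin(Q)+(x_n^e)$.} The inclusion $\supseteq$ is clear. For $\subseteq$, take a homogeneous $f=g+x_n^e p$ with $g\in Q$ homogeneous of the same degree. Let $\bar g$ be $g$ with all terms divisible by $x_n^e$ removed. Then $\bar g-g\in(x_n^e)$, so $f$ and $\bar g$ differ by an element of $(x_n^e)$.
\begin{itemize}
\item If $f\in(x_n^e)$, then $\inin(f)\in(x_n^e)$ and we are done.
\item Otherwise $f$ has terms not divisible by $x_n^e$, and these are exactly the terms of $\bar g$. So $\bar g\neq0$, and $g\notin(x_n^e)$.
\end{itemize}
In the second case, revlex gives $\inin(g)\notin(x_n^e)$. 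Moreover, the terms of $g$ not divisible by $x_n^e$ coincide with those of $f$. Since revlex ranks any monomial not divisible by $x_n^e$ above any monomial of the same degree divisible by $x_n^e$, the leading term of each of $g$ and $f$ is the largest such term. Hence $\inin(f)=\inin(g)\in\inin(Q)$.

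The comparison used here is the key point to verify carefully. Take $u,v$ of the same degree with $x_n^e\mid v$ and $x_n^e\nmid u$. Then the $x_n$-exponent of $u$ is less than that of $v$, so $u>v$ in revlex. This holds for every $e$, as needed.

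\emph{Step 2: equal ranks.} The rank of $x_n^e:(R_n/Q)_{q}\to(R_n/Q)_{q+e}$ equals
\[
\dim(R_n/Q)_{q+e}-\dim(R_n/(Q+(x_n^e)))_{q+e}.
\]
Hilbert functions are preserved by passing to initial ideals. By Step 1, both terms agree with the corresponding terms for $\inin(Q)$, so the ranks agree.

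\emph{Step 3: the case $e=1$, identity \eqref{eq:section-compat}.} Under $R_n/(x_n)\cong R_{n-1}$, the image of $f$ is $f|_{x_n=0}$, and the revlex order on $R_{n-1}$ is the restriction of the order on $R_n$.
\begin{itemize}
\item For $\supseteq$: a monomial of $\inin(Q)\cap R_{n-1}$ is $\inin(g)$ with $g\in Q$ and $x_n\nmid\inin(g)$. Then $\inin(g)$ is a term of $g|_{x_n=0}$ and is the largest one, so it equals $\inin(\bar g)$ in $\inin_{R_{n-1}}(\overline Q)$.
\item For $\subseteq$: given nonzero $\bar f\in\overline Q$, lift it to $g\in Q$ with $g|_{x_n=0}=\bar f$. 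Then $g\notin(x_n)$, so $\inin(g)$ is not divisible by $x_n$ and equals $\inin(\bar f)$.
\end{itemize}

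The main obstacle is only the careful handling of the revlex property in Steps 1 and 3; the rest is bookkeeping.
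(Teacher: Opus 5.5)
Your proof is correct. Steps 1 and 2 follow the paper's argument: the same case split on whether the leading monomial of $f$ lies in $(x_n^e)$, the same observation that a monomial with $x_n$-exponent below $e$ exceeds every monomial of $x_n^e p$, and the same rank formula via $A/x_n^eA$.

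The difference is in the section identity. You prove it directly by lifting elements of $\overline Q$ to $Q$ and applying the revlex property to the lift, so your Step 3 never uses Step 1. The paper instead identifies $\overline Q$ with $(Q+(x_n))\cap R_{n-1}$. It notes that $\inin_{R_{n-1}}(\overline Q)=\inin_{R_n}(Q+(x_n))\cap R_{n-1}$ holds simply because the order on $R_{n-1}$ is the restriction of the order on $R_n$. It then intersects the case $e=1$ of the first identity with $R_{n-1}$. That route puts all the revlex input in Step 1 and needs no lifting.

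Your route also works, but in the inclusion $\subseteq$ you need to state homogeneity explicitly. The implication $g\notin(x_n)\Rightarrow x_n\nmid\inin(g)$ holds only for homogeneous $g$. For instance, $g=(x_1+x_2)(1+x_2)$ in $K[x_1,x_2]$ has $g|_{x_2=0}=x_1$ but $\inin(g)=x_1x_2$. So first reduce to homogeneous $\bar f$; this suffices because $\overline Q$ is homogeneous and the order refines degree. Then replace the lift $g$ by its homogeneous component of degree $\deg\bar f$, which still lies in $Q$ and still restricts to $\bar f$.
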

\begin{proof}
The inclusion from right to left follows from
$Q,(x_n^e)\subseteq Q+(x_n^e)$. For the reverse inclusion,
take a nonzero homogeneous $f\in Q+(x_n^e)$ and write
$f=f_0+x_n^e a$ with $f_0\in Q$ and both summands homogeneous
of degree $\deg f$. If $x_n^e$ divides the leading monomial
of $f$, there is nothing to prove. Otherwise, that leading
monomial has $x_n$-exponent less than $e$. In degree reverse
lexicographic order it is larger than every monomial of
$x_n^e a$, so it is also the leading monomial of $f_0$.
It therefore belongs to $\inin(Q)$, proving the equality.
For either quotient $A$, the rank is
\[
 \rank(x_n^e:A_{t-e}\longrightarrow A_t)
 =\dim_K A_t-\dim_K(A/x_n^e A)_t \qquad(t\ge e),
\]
so the equality of initial ideals gives the rank assertion.

Setting $x_n=0$ identifies $\overline Q$ with
$(Q+(x_n))\cap R_{n-1}$. If a homogeneous $f\in Q+(x_n)$ has
leading monomial in $R_{n-1}$, its image in $\overline Q$ has the
same leading monomial. Conversely, each element of $\overline Q$,
viewed in $R_{n-1}$, belongs to $Q+(x_n)$. Hence
$\inin_{R_{n-1}}(\overline Q)=\inin_{R_n}(Q+(x_n))\cap R_{n-1}$.
The equality for $e=1$ now gives \eqref{eq:section-compat}, since
$x_n$ divides no monomial of $R_{n-1}$.
\end{proof}

For any finite sequence of positive integers
$h=(1,n,h_2,\ldots,h_\delta)$, let $\B_n(h)$ be the set of
Artinian strongly stable ideals $M\subset R_n$ such that $R_n/M$
has Hilbert function $h$ and
\[
 x_r\text{ is strong Lefschetz on }R_r/(M\cap R_r)
 \qquad(3\le r\le n).
\]
This set may be empty. For a monomial ideal $M$, one has
$R_r/(M\cap R_r)\simeq R_n/(M,x_{r+1},\ldots,x_n)$,
so the displayed condition specifies the successive Lefschetz
elements. For $n=2$ the range $3\le r\le n$ is empty.
The next lemma follows from \cite[Proposition~18]{HarimaWachi}
and the identity $\gin(M)=M$ for strongly stable monomial
ideals $M$. We include a proof for completeness.
For $h$ admissible in $n$ variables, uniqueness of the gin is
equivalent to $|\B_n(h)|=1$.

\begin{lemma}\label{lem:ginclass}
The elements of $\B_n(h)$ are precisely the generic initial
ideals of homogeneous ideals whose quotients have Hilbert
function $h$ and $(n-2)$-SLP. In particular, $h$ is admissible
in $n$ variables if and only if $\B_n(h)\ne\varnothing$.
\end{lemma}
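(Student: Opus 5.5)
We prove both inclusions and then read off the admissibility statement. Throughout, $Q$ is a homogeneous ideal with $R_n/Q$ Artinian with Hilbert function $h$. Fix general coordinates $g$ and put $J=\gin(Q)=\inin(gQ)$. In characteristic zero $J$ is strongly stable, and $R_n/J$ has Hilbert function $h$ and is Artinian.

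First inclusion: if $R_n/Q$ has $(n-2)$-SLP, then $J\in\B_n(h)$. Suppose $\ell_1,\ldots,\ell_{n-2}$ realize $(n-2)$-SLP on $A=R_n/Q$.

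1. We need that a general choice of $\ell_1$ is again strong Lefschetz. Maximal rank of all maps $\ell^e$ is a Zariski-open condition on $\ell\in A_1$, because it is the nonvanishing of certain minors. There are only finitely many relevant degrees, since $A$ is Artinian. The set is nonempty, so it is dense.
2. For the successive quotients, look at the set of tuples $(\ell_1,\ldots,\ell_{n-2})$ such that each $\ell_i$ is strong Lefschetz on $A/(\ell_1,\ldots,\ell_{i-1})A$. The Hilbert functions of these quotients are then forced to be the iterates $h^{[i]}$. Fixing them, the condition is again open (a constructible, open condition on the tuple), and it is nonempty. Hence a general tuple works.
3. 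For general $g$, the last variables $x_n,x_{n-1},\ldots,x_3$ in the new coordinates are general linear forms in this sense. So $x_n$ is strong Lefschetz on $R_n/gQ$.
4. By Lemma~\ref{lem:compat}, multiplication by $x_n^e$ has the same rank on $R_n/gQ$ and on $R_n/J$. Hence $x_n$ is strong Lefschetz on $R_n/J$.
5. Next set $\overline{gQ}=(gQ+(x_n))/(x_n)\subset R_{n-1}$. By \eqref{eq:section-compat}, $\inin(\overline{gQ})=J\cap R_{n-1}$.
6. By genericity of $g$, the quotient $R_{n-1}/\overline{gQ}$ is a general-coordinate form of $A/\ell_1A$, with $x_{n-1}$ strong Lefschetz on it. Applying Lemma~\ref{lem:compat} again gives that $x_{n-1}$ is strong Lefschetz on $R_{n-1}/(J\cap R_{n-1})$.
7. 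Induct down to $r=3$. This gives $J\in\B_n(h)$.

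The delicate point in steps 5 and 6 is that $g$ is general for all the successive restrictions simultaneously. I would handle it by intersecting finitely many nonempty Zariski-open subsets of $GL_n$: one for the gin, and one for each Lefschetz condition on the last variables.

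Second inclusion: every $M\in\B_n(h)$ is a gin of this kind. Take $Q=M$. Then $\gin(M)=M$, since $M$ is strongly stable in characteristic zero. It remains to check that $R_n/M$ has $(n-2)$-SLP. Take $\ell_i=x_{n-i+1}$ for $1\le i\le n-2$. We have $R_n/(M,x_n,\ldots,x_{r+1})\simeq R_r/(M\cap R_r)$, and by hypothesis $x_r$ is strong Lefschetz on it for $3\le r\le n$. This is exactly $(n-2)$-SLP with $\ell_i=x_{n-i+1}$.

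Admissibility. If $h$ is admissible, some $Q$ exists and its gin lies in $\B_n(h)$, so $\B_n(h)\ne\varnothing$. Conversely, any $M\in\B_n(h)$ realizes $h$ with $(n-2)$-SLP, so $h$ is admissible.

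The main obstacle is the genericity bookkeeping in the first inclusion. One must justify that the gin coordinates can be chosen so that the last variables form a successive strong Lefschetz sequence, which needs the openness in step 2. An alternative is to cite \cite[Proposition~18]{HarimaWachi} for this step.
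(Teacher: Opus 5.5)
Your proposal is correct and follows essentially the same route as the paper. Both arguments show that the locus $U\subset(A_1)^{n-2}$ of tuples realizing $(n-2)$-SLP is open and nonempty, pull it back to $\mathrm{GL}_n$ via $g\mapsto(g^{-1}x_n,\ldots,g^{-1}x_3)$, and intersect with the gin locus. They then transfer the Lefschetz property to $M\cap R_r$ through Lemma~\ref{lem:compat} and \eqref{eq:section-compat}, with the converse via $\gin(M)=M$.

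The one point you leave implicit is that this pullback is nonempty, which needs a linearly independent tuple in $U$. The paper gets this from the fact that each $\ell_i$ has rank one from $B_0$ to $B_1$; it also follows from irreducibility of $(A_1)^{n-2}$.
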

\begin{proof}
Let $A=R_n/Q$ have the stated properties and put $M=\gin(Q)$.
The quotient $R_n/M$ is Artinian with Hilbert function $h$, and
$M$ is strongly stable in characteristic zero \cite[Proposition~4.2.6(a)]{HerzogHibi}.
For $n=2$ this gives $M\in\B_2(h)$. Assume $n\ge3$.

Fix bases of the graded pieces of $A$. The matrices of multiplication
by $\ell^e:A_q\to A_{q+e}$ have polynomial entries in the coefficients
of $\ell\in A_1$. Only the finitely many maps with $e\ge1$ and
$0\le q<q+e\le\delta$ need to be checked. For each of them, maximal
rank is equivalent to the nonvanishing of at least one maximal
minor. Thus the strong Lefschetz elements form an open subset of $A_1$.

On this open set, $A/\ell A$ has Hilbert function $\Delta h$.
Where chosen maximal minors of multiplication by $\ell$ are nonzero,
Gaussian elimination gives bases of the graded pieces of $A/\ell A$
whose coefficients depend rationally on $\ell$, with nonzero
denominators. In these bases, maximal rank for each power of the
next form is again an open condition. Repeating the argument shows
that the set $U\subset (A_1)^{n-2}$ of tuples realizing $(n-2)$-SLP
is open. It is nonempty by hypothesis.

Take $(\ell_1,\ldots,\ell_{n-2})\in U$. For $1\le i\le n-2$, set
$B=A/(\ell_1,\ldots,\ell_{i-1})A$, with $B=A$ for $i=1$. Since
$h_1=n$,
\[
 \dim_K B_1
 =n-\dim_K\langle\ell_1,\ldots,\ell_{i-1}\rangle_K
 \ge n-i+1>0.
\]
Multiplication by the image of $\ell_i$ from $B_0=K$ to $B_1$
has maximal rank, hence rank one. Thus $\ell_i$ lies outside the
span of the preceding forms. This proves linear independence and
gives the successive degree-one dimensions $n,n-1,\ldots,2$.

The equality $h_1=n$ also gives $Q_1=0$, so $A_1=(R_n)_1$.
For $g\in\mathrm{GL}_n(K)$, the induced isomorphism $A\to R_n/gQ$
takes $g^{-1}x_r$ to $x_r$. Thus the set of $g$ for which
$x_n,\ldots,x_3$ realize $(n-2)$-SLP on $R_n/gQ$ is the inverse
image of $U$ under the regular map $g\mapsto(g^{-1}x_n,\ldots,g^{-1}x_3)$.
This open set is nonempty because each tuple in $U$ extends to a basis
of $(R_n)_1$ and can therefore be sent to $x_n,\ldots,x_3$ in that order.
It meets the nonempty open set where $\inin(gQ)=M$ by irreducibility
of $\mathrm{GL}_n$. Since $\mathrm{GL}_n$ is open in affine space and
$K$ is infinite, the intersection contains a $K$-rational point.
Fix such a $g$.

For $3\le r\le n$, let $Q_r\subset R_r$ be the image of $gQ$ after
setting $x_{r+1},\ldots,x_n$ to zero. Then $Q_n=gQ$, and for $r>3$
setting $x_r=0$ sends $Q_r$ to $Q_{r-1}$. Starting from
$\inin_{R_n}(Q_n)=M$, we obtain by repeated application of
\eqref{eq:section-compat}
\[
 \inin_{R_r}(Q_r)=M\cap R_r\qquad(3\le r\le n).
\]
By the choice of $g$, $x_r$ is strong Lefschetz on $R_r/Q_r$.
The quotients $R_r/Q_r$ and $R_r/(M\cap R_r)$ have the same
Hilbert function. By Lemma~\ref{lem:compat}, multiplication by
$x_r^e$ has the same rank in each degree on both quotients for
every $e\ge1$. Hence $x_r$ is strong Lefschetz on $R_r/(M\cap R_r)$
for every $3\le r\le n$, so $M\in\B_n(h)$.

Conversely, if $M\in\B_n(h)$, then $M$ is strongly stable, hence
Borel-fixed, and therefore $\gin(M)=M$
\cite[Propositions~4.2.4(c) and~4.2.6(b)]{HerzogHibi}.
Taking $Q=M$ gives $\gin(Q)=M$, and $R_n/Q$ has Hilbert function
$h$ and $(n-2)$-SLP by the definition of $\B_n(h)$.
\end{proof}

\subsection{Monomial chains and ideals with a prescribed section}
For a monomial ideal $M$, let $E(M)$ be its set of standard
monomials and $E(M)_q$ its degree-$q$ part. Suppose that
$M\subset R_n$ is Artinian, with section $J=M\cap R_{n-1}$.
For each $u\in E(J)$ there is a unique $a\ge0$ such that
$u,ux_n,\ldots,ux_n^a$ are standard and $ux_n^{a+1}\in M$.
We call this sequence the $x_n$-chain starting at $u$.
We say that this chain \emph{reaches degree $t$} if it contains a
monomial of degree $t$, that is, if $\deg u\le t\le\deg u+a$.
Each standard monomial belongs to a unique such chain because
removing its highest power of $x_n$ recovers its starting monomial.
For this decomposition and the comparison of chain endpoints in
the last assertion of the next lemma, see
\cite[Definition and Remark~5, Definition~6(i), and Lemma~7]{HarimaWachi}.
The ranks of multiplication by powers of $x_n$ determine how many
chains start in degree $q$ and reach degree $t$.

\Needspace{12\baselineskip}
\begin{lemma}\label{lem:counts}
Let $M\subset R_n$ be an Artinian monomial ideal and set
$J=M\cap R_{n-1}$. Suppose that $x_n$ is strong Lefschetz on
$R_n/M$ and that its Hilbert function is $g$. Put $p=\Delta g$ and
$\tau=\max\supp p$. Then $R_{n-1}/J$ has Hilbert function $p$.
For $0\le q\le\tau$ and $t\ge q$, the number of chains starting in degree
$q$ and reaching degree $t$ is
\begin{equation}\label{eq:rho-local}
 \rho_q(t)=\min\{g_q,g_t\}-\min\{g_{q-1},g_t\}.
\end{equation}
For $h$ admissible in $n\ge3$ variables, taking $g=h$ in
\eqref{eq:rho-local} recovers the case $j=0$ of \eqref{eq:counts}.
For $0\le q<q'\le\tau$, if a chain starting in degree $q'$
reaches degree $t\ge q'$, then every chain starting in degree
$q$ reaches that degree.
\end{lemma}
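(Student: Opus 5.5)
The plan is to treat the three assertions separately, working throughout with the chain decomposition of $E(M)$ and the rank formula from Lemma~\ref{lem:compat}.

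\emph{Hilbert function of the section.} Since $x_n$ is strong Lefschetz, multiplication by $x_n:(R_n/M)_{q-1}\to(R_n/M)_q$ has rank $\min\{g_{q-1},g_q\}$. Hence $\dim_K(R_n/(M+(x_n)))_q=g_q-\min\{g_{q-1},g_q\}=(g_q-g_{q-1})_+=p_q$. Because $M$ is a monomial ideal, $R_n/(M+(x_n))\cong R_{n-1}/J$, which gives the first claim. The same computation shows that exactly $p_q$ chains start in degree $q$, namely one for each monomial of $E(J)_q$.

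\emph{Counting chains.} Fix $t\ge q$. The standard monomials of degree $t$ that are divisible by $x_n^{t-q'}$ are exactly those lying on chains that start in degree at most $q'$ and reach degree $t$. These monomials span the image of $x_n^{t-q'}:(R_n/M)_{q'}\to(R_n/M)_t$. Indeed, for a monomial ideal the image is spanned by the products $wx_n^{t-q'}\notin M$ with $w$ standard, and such a product is standard only if $w$ is; note also that $x_n^{t-q'}\mid v$ with $\deg v=t$ exactly when the chain of $v$ starts in degree at most $q'$. By strong Lefschetz this rank equals $\min\{g_{q'},g_t\}$, including the case $q'=t$, where the map is the identity. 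Taking $q'=q$ and $q'=q-1$ and subtracting yields $\rho_q(t)$; for $q=0$ we use $g_{-1}=0$. The identification with \eqref{eq:counts} at $j=0$ is immediate, since $h^{[0]}=h$ and $\delta_1=\tau$.

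\emph{Comparison of chain endpoints.} This is the main point and needs strong stability, which I assume here because in the intended application $M\in\B_n(h)$; the statement only says monomial, so this hypothesis should be made explicit. Let $u\in E(J)_{q'}$ have $ux_n^{t-q'}\notin M$, and let $u'\in E(J)_q$ with $q<q'$. Nothing in a direct divisibility argument relates $u$ and $u'$, so the plan is to argue numerically. Let $c_r$ denote the number of chains starting in degree $r$ and reaching degree $t$, so $c_r=\rho_r(t)$. The claim is that $\rho_{q'}(t)>0$ forces $\rho_q(t)=p_q$. By the counting formula, $\rho_{q'}(t)>0$ means $\min\{g_{q'},g_t\}>\min\{g_{q'-1},g_t\}$, so $g_t>g_{q'-1}$. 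Since $g$ is strictly increasing through $\tau$ and $q\le q'-1$, we get $g_t>g_{q'-1}\ge g_q>g_{q-1}$. Then $\rho_q(t)=g_q-g_{q-1}=p_q$, so every chain starting in degree $q$ reaches degree $t$. Strong stability is therefore not actually needed; the only subtle point is the monotonicity of $g$ on $\{0,\ldots,\tau\}$, which follows from $p_q>0$ there, and this is where I expect the care to lie.
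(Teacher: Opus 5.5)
Your proof is correct and follows the paper's argument essentially step by step: the cokernel computation for the section, the chain count as the rank difference $\rank(x_n^{t-q}:B_q\to B_t)-\rank(x_n^{t-q+1}:B_{q-1}\to B_t)$, and the purely numerical inequality $g_t>g_{q'-1}\ge g_q$ for the last claim. As you conclude yourself, strong stability is not needed. The one point you should justify explicitly is that $p_q>0$ for every $0\le q\le\tau$; this holds because $p$ is the Hilbert function of the standard graded algebra $R_{n-1}/J$, which has no nonzero graded piece after a zero one.
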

\begin{proof}
Put $B=R_n/M$. Since $M$ is monomial, setting $x_n=0$ sends $M$
onto $J$. Thus $R_{n-1}/J\simeq B/x_n B$, whose degree-$q$ component
is the cokernel of multiplication by $x_n:B_{q-1}\to B_q$.
This map has maximal rank, so for $q\ge0$, with $B_{-1}=0$,
\[
 \dim_K\!\bigl((B/x_n B)_q\bigr)
 =g_q-\min\{g_{q-1},g_q\}=(g_q-g_{q-1})_+=p_q.
\]
In particular, $|E(J)_q|=p_q$.

For $0\le q\le\tau$ and $t\ge q$, the rank of $x_n^{t-q}:B_q\to B_t$
counts the chains starting in degree at most $q$ and reaching $t$.
Each such chain contributes one nonzero image, and these images are
distinct standard monomials of degree $t$. The rank from degree $q-1$
to degree $t$ counts those among them that start before $q$. Hence
the number of chains starting in degree $q$ and reaching $t$ is
\[
 \rho_q(t)=\rank(x_n^{t-q}:B_q\to B_t)
           -\rank(x_n^{t-q+1}:B_{q-1}\to B_t).
\]
By maximal rank, these ranks are $\min\{g_q,g_t\}$ and
$\min\{g_{q-1},g_t\}$, respectively, giving \eqref{eq:rho-local}.
When $t=q$ the first map is the identity, and when $q=0$ the
second has zero source.

The support of $p$ is $\{0,\ldots,\tau\}$ because a standard
graded algebra has no nonzero graded piece after a zero piece.
Thus $g$ is strictly increasing through $\tau$ and is
nonincreasing thereafter.

\Needspace{5\baselineskip}
Since $g_q=g_{q-1}+p_q$ for $0\le q\le\tau$,
equation~\eqref{eq:rho-local} becomes
\[
 \rho_q(t)=\min\{p_q,(g_t-g_{q-1})_+\}.
\]
It equals $p_q$ for $q\le t\le\tau$ and is nonincreasing in $t$
thereafter. Suppose a chain starting in degree $q'$, with
$q<q'\le\tau$, reaches $t\ge q'$. Then $\rho_{q'}(t)>0$, so
$g_t>g_{q'-1}$. Since $g$ is increasing through degree $\tau$,
$g_{q'-1}\ge g_q$. Hence $\rho_q(t)=g_q-g_{q-1}=p_q$.
Since $p_q$ is the total number of chains starting in degree $q$,
every such chain reaches degree $t$.
\end{proof}

We denote the \emph{Borel order} on monomials of a fixed degree in
$R_n$ by $\preceq$. Thus $u\preceq v$ if $u$ can be obtained from
$v$ by zero or more replacements of a factor $x_i$ by $x_j$ with $i<j$.
In terms of the exponents,
\begin{equation}\label{eq:borel}
 x^\alpha\preceq x^\beta
 \quad\Longleftrightarrow\quad
 \sum_{i=1}^a\alpha_i\le\sum_{i=1}^a\beta_i
 \quad(1\le a\le n-1).
\end{equation}
Each replacement weakly decreases every partial sum. Conversely,
the sum at $a$ counts factors with index at most $a$. Thus, in
nondecreasing order, the factor indices of $x^\beta$ are termwise
at most those of $x^\alpha$ when the inequalities hold. Replacing
the corresponding factors gives $x^\alpha$. In $R_{n-1}$ both sums
at $a=n-1$ equal the common degree, so only the first $n-2$ comparisons
are needed. Each replacement decreases the monomial in degree reverse
lexicographic order, so $u\preceq v$ implies $u\le v$ in that order.
Let $(P,\preceq)$ be a partially ordered set.
A subset $D\subseteq P$ is a \emph{lower set} if
\[
 v\in D,\quad u\in P,\quad u\preceq v
 \quad\Longrightarrow\quad u\in D.
\]
If $P$ is a set of monomials of the same degree with the Borel order,
the first $k$ monomials of $P$ in increasing degree reverse
lexicographic order form a lower set.

\Needspace{20\baselineskip}
With the section and the quotient Hilbert function fixed, we use lower
sets to record which $x_n$-chains reach each degree.

\begin{proposition}\label{prop:fiber}
Let $g$ be a finite sequence of positive integers with $g_0=1$.
Put $p=\Delta g$, $\tau=\max\supp p$, and $\delta=\max\supp g$.
Fix an Artinian strongly stable ideal $J\subset R_{n-1}$ such that
$R_{n-1}/J$ has Hilbert function $p$.

Then there exists an Artinian strongly stable ideal $M\subset R_n$
with section $J$ such that $R_n/M$ has Hilbert function $g$ and
strong Lefschetz element $x_n$. The set of all such ideals $M$ is
in bijection with the families of lower sets $D_{q,t}\subseteq E(J)_q$
such that, for $0\le q\le\tau$ and $q\le t\le\delta+1$,
\begin{equation}\label{eq:flags}
 E(J)_q=D_{q,q}\supseteq D_{q,q+1}\supseteq\cdots
 \supseteq D_{q,\delta+1}=\varnothing,\quad
 \text{with } |D_{q,t}|=\rho_q(t).
\end{equation}
The families for different values of $q$ can be chosen independently.
Here $\rho_q(t)$ is given by \eqref{eq:rho-local}.

Such an ideal $M$ is unique if and only if $E(J)_q$, ordered
by $\preceq$, has a unique lower set of cardinality $\rho_q(t)$
for all $0\le q\le\tau$ and $q\le t\le\delta$.
\end{proposition}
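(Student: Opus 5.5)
The strategy is to describe $M$ through its $x_n$-chains. An Artinian monomial ideal $M\subset R_n$ with section $J$ is determined by its standard monomials. Each standard monomial lies in a unique chain $u,ux_n,\ldots,ux_n^{a(u)}$ with $u\in E(J)$. So $M$ is the same datum as the length function $a:E(J)\to\Z_{\ge0}$. Given $M$, I set
\[
 D_{q,t}=\{u\in E(J)_q: \text{the chain of } u \text{ reaches degree } t\}.
\]
These sets are automatically nested and satisfy $D_{q,q}=E(J)_q$. Conversely, nested families determine $a$. The statement then reduces to three checks:
\begin{enumerate}
\item strong stability of $M$ corresponds to a condition on the $D_{q,t}$;
\item the Hilbert function $g$ together with $x_n$ being strong Lefschetz forces $|D_{q,t}|=\rho_q(t)$;
\item conversely, any family satisfying \eqref{eq:flags} produces such an $M$.
\end{enumerate}

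\emph{Necessity.} If $M$ is strongly stable with section $J$, Hilbert function $g$ and Lefschetz element $x_n$, then Lemma~\ref{lem:counts} gives $|D_{q,t}|=\rho_q(t)$. Since $g_{\delta+1}=0$, we get $D_{q,\delta+1}=\varnothing$. For the lower set property, take $u\preceq v$ in $E(J)_q$ with $vx_n^{t-q}\notin M$. Then $ux_n^{t-q}\preceq vx_n^{t-q}$, because partial sums over the first $n-1$ indices only shrink. Strong stability then shows that $ux_n^{t-q}$ is standard. Note that $E(J)_q$ is empty for $q>\tau$ by Lemma~\ref{lem:counts}, which is why only $q\le\tau$ appears.

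\emph{Sufficiency.} Given lower sets as in \eqref{eq:flags}, I let $E$ be the set of monomials $ux_n^{t-q}$ with $u\in D_{q,t}$, and let $M$ be spanned by all other monomials. The steps are as follows.
\begin{itemize}
\item \emph{$M$ is an ideal, i.e.\ $E$ is closed under divisors.} Dividing by $x_n$ stays inside $E$ by nestedness. Dividing by $x_i$ with $i<n$ sends $ux_n^{t-q}$ to $(u/x_i)x_n^{t-q}$, which has start $u/x_i$ in degree $q-1$ and reaches degree $t-1$. I must show $u/x_i\in D_{q-1,t-1}$. This is the main obstacle. The chain of $u$ reaches $t$, so $\rho_q(t)>0$, so $g_t>g_{q-1}$ as in the proof of Lemma~\ref{lem:counts}. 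Then $g_{t-1}\ge g_t$ when $t-1\ge\tau$, since $g$ is nonincreasing there, while for $t-1<\tau$ strict increase gives $g_{t-1}\ge g_{q-1}$. In either case $\rho_{q-1}(t-1)=p_{q-1}$, so $D_{q-1,t-1}$ is all of $E(J)_{q-1}$, and $u/x_i$ lies in it.
\item \emph{$M$ is strongly stable.} Moves $x_i\to x_j$ with $i<j<n$ stay in the same degree $q$, and the lower set condition handles them. A move $x_i\to x_n$ sends the start $u$ to $u/x_i$ in degree $q-1$, with the chain now having to reach the same degree $t$. The same estimate as above settles it: $\rho_{q-1}(t)=p_{q-1}$ because $g_t>g_{q-1}\ge g_{q-2}$.
\item \emph{Section, Hilbert function and Lefschetz property.} The section of $M$ is $J$ because $D_{q,q}=E(J)_q$. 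Counting chains in degree $t$ gives $\sum_q\rho_q(t)=\min\{g_\tau,g_t\}=g_t$, by telescoping and using $t$ versus $\tau$. The chain counts of multiplication by $x_n^e$ give ranks equal to telescoped sums of $\rho$, namely $\min\{g_q,g_t\}$, so $x_n$ is strong Lefschetz. The map is bijective since the two constructions are inverse to each other.
\end{itemize}

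\emph{Existence and uniqueness.} For existence I take $D_{q,t}$ to be the first $\rho_q(t)$ elements of $E(J)_q$ in increasing revlex order. These are lower sets, and they are nested because $\rho_q(t)$ is nonincreasing in $t$ (Lemma~\ref{lem:counts}). Different $q$ are independent because the constraints in \eqref{eq:flags} involve a single $q$.

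For uniqueness, if every cardinality $\rho_q(t)$ admits only one lower set, the family is forced, so $M$ is unique. Conversely, suppose two distinct lower sets $D\ne D'$ of size $\rho_q(t)$ exist for some $t\le\delta$. I need two full flags through them. Here I use that a lower set of a finite poset can be extended or shrunk one element at a time through lower sets, by adding a minimal element of the complement or removing a maximal element. This yields two flags with the prescribed cardinalities, one passing through $D$ and one through $D'$. They give two different ideals $M$.
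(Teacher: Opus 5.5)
Your proposal is correct and follows the paper's proof essentially step for step. You read off the chain sets $D_{q,t}$ from $M$, rebuild $M$ from the flags using the estimate $g_t>g_{q-1}$ to force the degree-$(q-1)$ sets to be full, count ranks by telescoping, and extend a lower set to a full flag one element at a time for non-uniqueness. The only difference is cosmetic: you check that $D_{q-1,t-1}$ and $D_{q-1,t}$ are full separately, whereas the paper proves the single separation property $D_{q,t}\neq\varnothing\Rightarrow D_{q-1,t}=E(J)_{q-1}$ and gets the other case from nestedness.
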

\Needspace{7\baselineskip}
\begin{proof}
Since $p$ is the Hilbert function of the standard graded algebra
$R_{n-1}/J$, its support is $\{0,\ldots,\tau\}$. As
$p_q=(g_q-g_{q-1})_+$, the sequence $g$ is strictly increasing
through degree $\tau$ and nonincreasing thereafter.

\smallskip
\noindent\emph{Step 1: Define the sets $D_{q,t}$ from $M$.}
For such an ideal $M$, let $D_{q,t}$ consist of all $u\in E(J)_q$
with $ux_n^{t-q}\notin M$.
If $u\preceq v$ in $E(J)_q$ and $v\in D_{q,t}$, strong stability
of $M$ gives $ux_n^{t-q}\notin M$, so $u\in D_{q,t}$.
Thus $D_{q,t}$ is a lower set. For $q\le t\le\delta$ and
$u\in D_{q,t+1}$, the divisor $ux_n^{t-q}$ of $ux_n^{t+1-q}$
is standard, so $u\in D_{q,t}$. Also, $D_{q,q}=E(J)_q$ since
$M\cap R_{n-1}=J$, and $D_{q,\delta+1}=\varnothing$ since $g_{\delta+1}=0$.

An element $u\in E(J)_q$ belongs to $D_{q,t}$ exactly when its
$x_n$-chain reaches degree $t$. Lemma~\ref{lem:counts} therefore
gives $|D_{q,t}|=\rho_q(t)$, as required in \eqref{eq:flags}.

\smallskip
\noindent\emph{Step 2: Construct $M$ from the sets $D_{q,t}$.}
For fixed $q$, the first $\rho_q(t)$ monomials of $E(J)_q$ in
increasing degree reverse lexicographic order form a lower set.
By \eqref{eq:rho-local}, the sizes $\rho_q(t)$ equal $p_q$ for
$q\le t\le\tau$ and are nonincreasing thereafter, with $\rho_q(\delta+1)=0$.
Thus the sets formed by the first $\rho_q(t)$ monomials satisfy
\eqref{eq:flags}.

For each $q$, now fix an arbitrary family of lower sets
$(D_{q,t})_{q\le t\le\delta+1}$ satisfying \eqref{eq:flags}, and set
\begin{equation}\label{eq:lift-proof}
 E=\bigcup_{q=0}^{\tau}\ \bigcup_{t=q}^{\delta}
       \{ux_n^{t-q}:u\in D_{q,t}\}.
\end{equation}
For $1\le q\le\tau$ and $q\le t\le\delta+1$, we have
\begin{equation}\label{eq:separation}
 D_{q,t}\ne\varnothing
 \quad\Longrightarrow\quad
 D_{q-1,t}=E(J)_{q-1}.
\end{equation}
Indeed, $D_{q,t}\ne\varnothing$ gives $\rho_q(t)>0$, so
$g_t>g_{q-1}$ by \eqref{eq:rho-local}. Applying the same formula
in degree $q-1$ gives
$|D_{q-1,t}|=g_{q-1}-g_{q-2}=p_{q-1}=|E(J)_{q-1}|$.

Write a monomial of $E$ as $ux_n^k$, where $q=\deg u$,
$t=q+k$, and $u\in D_{q,t}$.
If $k>0$, the inclusions give $u\in D_{q,t-1}$, so division by
$x_n$ stays in $E$. If $x_i\mid u$ with $i<n$, let $v=u/x_i$.
Then $v\in E(J)_{q-1}$ because every divisor of a standard
monomial of $J$ is standard. Equation~\eqref{eq:separation}
gives $v\in D_{q-1,t}$, hence also $v\in D_{q-1,t-1}$. Thus
$vx_n^{k+1}$ and $vx_n^k$ lie in $E$. These are obtained from
$ux_n^k$ by replacing $x_i$ with $x_n$ and by dividing by $x_i$,
respectively.
Finally, for $i<j<n$ and $x_i\mid u$, strong stability of
$J$ gives $x_j u/x_i\in E(J)_q$. This monomial is below $u$
in the Borel order, so the lower-set property gives
$(x_j u/x_i)x_n^k\in E$.
The only relation used between different starting degrees is
\eqref{eq:separation}. It follows from the prescribed cardinalities,
so the families for different $q$ can be chosen independently.

\Needspace{7\baselineskip}
Since $E$ is closed under taking divisors, every multiple of a
monomial outside $E$ is also outside $E$. Thus the monomials
outside $E$ span a monomial ideal $M$ with $E(M)=E$.
Let $w\in M$ be a monomial, with $x_j\mid w$ and $i<j$.
If $x_iw/x_j$ were in $E$, replacing $x_i$ by $x_j$ would give
$w\in E$. Hence $x_iw/x_j\in M$, so $M$ is strongly stable.
Since $E$ is finite,
$M$ is Artinian. In \eqref{eq:lift-proof}, $ux_n^{t-q}$ lies in
$R_{n-1}$ exactly when $t=q$. Hence
$E(M)\cap R_{n-1}=\bigcup_{q=0}^{\tau}D_{q,q}=E(J)$, and
$M\cap R_{n-1}=J$.

\smallskip
\noindent\emph{Step 3: Check the Hilbert function and SLP.}
Let $B=R_n/M$ and $0\le a\le b\le\delta$. Every degree-$a$
standard monomial has a unique expression $ux_n^{a-q}$ with
$0\le q\le\min\{a,\tau\}$ and $u\in D_{q,a}$. Its image under
$x_n^{b-a}$ is $ux_n^{b-q}$ if $u\in D_{q,b}$ and zero otherwise.
The nonzero images, indexed by $u\in D_{q,b}\subseteq D_{q,a}$,
are distinct elements of the standard monomial basis of $B_b$ and
hence linearly independent. Thus
\[
 \rank(x_n^{b-a}:B_a\longrightarrow B_b)
 =\sum_{q=0}^{\min\{a,\tau\}}\rho_q(b)
 =\min\{g_{\min\{a,\tau\}},g_b\}
 =\min\{g_a,g_b\}.
\]
Summing \eqref{eq:rho-local} cancels consecutive terms, with
$g_{-1}=0$. The last equality is immediate for $a\le\tau$.
For $a>\tau$, it follows from $g_b\le g_a\le g_\tau$.

Taking $a=b$ gives $\dim_K B_a=g_a$ since the map is the identity.
For $a<b\le\delta$ the computed rank is therefore maximal.
For $b>\delta$, \eqref{eq:lift-proof} gives $B_b=0$, so
$\dim_K B_b=g_b=0$ and every map into $B_b$ has maximal rank.
Thus $B$ has Hilbert function $g$ and $x_n$ is strong Lefschetz on $B$.

\smallskip
\noindent\emph{Step 4: Decide uniqueness.}
The two constructions are inverse: the sets $D_{q,t}$ specify
exactly which monomials $ux_n^{t-q}$ are outside $M$. If every
lower set of each required cardinality is unique, all $D_{q,t}$,
and hence $M$, are unique.

Conversely, fix $q,t$ for which two distinct lower sets $A,A'$
have cardinality $\rho_q(t)$. Order the elements of $A$ first,
followed by those of $E(J)_q\setminus A$, respecting
$\preceq$ within each part. Such an ordering is obtained
by repeatedly choosing a minimal remaining element. Since
$A$ is a lower set, no element outside $A$ lies below an
element of $A$; hence the entire ordering respects $\preceq$.
For every $q\le s\le\delta+1$, take the first $\rho_q(s)$ elements in this
ordering. These sets satisfy \eqref{eq:flags} and give
$D_{q,t}=A$. Repeating the construction with $A'$ gives
$D_{q,t}=A'$. Choose the same sets for every other value
of $q$, and denote the resulting ideals by $M$ and $M'$, respectively.
Since $A$ and $A'$ are distinct and have the same cardinality,
choose $u\in A\setminus A'$. By \eqref{eq:lift-proof},
$ux_n^{t-q}\in M'\setminus M$, so $M\ne M'$.
\end{proof}

For $n\ge3$ and $h$ admissible in $n$ variables, the section map
\begin{equation}\label{eq:section-map}
 \B_n(h)\longrightarrow\B_{n-1}(\Delta h),
 \qquad M\longmapsto M\cap R_{n-1},
\end{equation}
is well-defined and surjective. If $M\in\B_n(h)$,
then $R_{n-1}/(M\cap R_{n-1})$ has Hilbert function $\Delta h$
by Lemma~\ref{lem:counts}, and the Lefschetz conditions for
$3\le r\le n-1$ follow from $(M\cap R_{n-1})\cap R_r=M\cap R_r$.
Conversely, for $J\in\B_{n-1}(\Delta h)$,
Proposition~\ref{prop:fiber} constructs $M$ with
$M\cap R_{n-1}=J$ and strong Lefschetz element $x_n$.
For $3\le r<n$, we have $M\cap R_r=J\cap R_r$, so $M\in\B_n(h)$.
This gives the case $k=n-2$ of \cite[Proposition~21(i)]{HarimaWachi}.
Proposition~\ref{prop:fiber} also determines exactly when there is
only one such $M$ for a given $J$.

\Needspace{10\baselineskip}
\subsection{A numerical criterion for admissibility}
An $O$-sequence is the Hilbert function of a standard graded
$K$-algebra (see \cite[p.~4017]{HarimaWachi}).
For a finite sequence $h$ with $h_1=2$, Macaulay's theorem
\cite[Theorem~6.3.8]{HerzogHibi} shows that $h$ is an $O$-sequence
if and only if it is the Hilbert function of an Artinian quotient of
$R_2$. Since $0$-SLP imposes no further condition, this is precisely
what it means for $h$ to be admissible in two variables.
Harima and Wachi \cite[Proposition~23]{HarimaWachi} characterize the
$O$-sequences occurring as Hilbert functions of algebras with $k$-SLP.
For $k=n-2$, Proposition~\ref{prop:fiber} gives the following test.

\begin{proposition}\label{prop:admissibility}
Let $n\ge3$ and let $h=(1,n,h_2,\ldots,h_\delta)$ be a finite
sequence of positive integers. Then $h$ is admissible in $n$ variables
if and only if $h^{[n-2]}$ is an $O$-sequence, that is, the Hilbert
function of an Artinian quotient of $R_2$.
\end{proposition}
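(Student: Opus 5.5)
We argue by induction on $n\ge3$, using Lemma~\ref{lem:ginclass}: $h$ is admissible in $n$ variables if and only if $\B_n(h)\ne\varnothing$.

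For the base case $n=3$, note that $h^{[1]}=\Delta h$ has $h^{[1]}_1=2$. Suppose first that $\B_3(h)\ne\varnothing$ and pick $M$ in it. By Lemma~\ref{lem:counts}, $R_2/(M\cap R_2)$ has Hilbert function $\Delta h$, and this quotient is Artinian, so $\Delta h$ is an $O$-sequence. Conversely, suppose $\Delta h$ is an $O$-sequence. Macaulay's theorem gives an Artinian quotient of $R_2$ with Hilbert function $\Delta h$. Replacing its ideal by its gin, which is strongly stable in characteristic zero, yields a strongly stable Artinian ideal $J\subset R_2$ with Hilbert function $\Delta h$. In two variables the only extra requirement is that $J$ lie in $\B_2(\Delta h)$, and this holds automatically since $\B_2$ imposes no Lefschetz conditions. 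We then apply Proposition~\ref{prop:fiber} with $g=h$ and $p=\Delta h$. Its hypotheses require $g$ to be a finite sequence of positive integers with $g_0=1$; these are our standing assumptions on $h$, and $\max\supp g=\delta$. The proposition produces a strongly stable Artinian $M\subset R_3$ with section $J$, Hilbert function $h$, and strong Lefschetz element $x_3$. Hence $M\in\B_3(h)$.

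For $n\ge4$ the argument is the same. If $M\in\B_n(h)$, the section map \eqref{eq:section-map} gives $M\cap R_{n-1}\in\B_{n-1}(\Delta h)$. Here $\Delta h$ has first entry $1$ and $(\Delta h)_1=n-1$, and its $(n-3)$-fold iterate equals $h^{[n-2]}$. So by induction $h^{[n-2]}$ is an $O$-sequence. Conversely, if $h^{[n-2]}$ is an $O$-sequence, induction applied to $\Delta h$ in $n-1$ variables gives some $J\in\B_{n-1}(\Delta h)$. Proposition~\ref{prop:fiber} then lifts $J$ to an $M$ with section $J$, Hilbert function $h$, and strong Lefschetz element $x_n$. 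The Lefschetz conditions for $3\le r<n$ are inherited from $J$ because $M\cap R_r=J\cap R_r$, so $M\in\B_n(h)$.

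The one point needing care is that the induction hypothesis must apply to $\Delta h$. We need $\Delta h$ to be a finite sequence of positive integers of the form $(1,n-1,\dots)$, that is, without internal zeros. Its first entries are $1$ and $h_1-h_0=n-1$. To rule out internal zeros, we use the hypothesis of the induction direction being proved. In the converse direction, $h^{[n-2]}$ is an $O$-sequence, hence has support an initial interval. If some $h^{[j]}$ had a zero followed by a positive entry, the later differences would inherit a positive value after a gap in support, and so would $h^{[n-2]}$, which is impossible. We therefore check inductively that each $h^{[j]}$ rises strictly and then is nonincreasing. This follows from the definition of $\Delta$ together with positivity of $h^{[n-2]}$ near $0$. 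In the forward direction, positivity of $\Delta h$ on its support follows directly from Lemma~\ref{lem:counts}. This bookkeeping is the main obstacle; the rest is a direct combination of Lemma~\ref{lem:ginclass}, Lemma~\ref{lem:counts}, and Proposition~\ref{prop:fiber}.
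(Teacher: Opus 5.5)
Your proof is correct and follows the paper's argument: the converse is the paper's step-by-step lift from an ideal in $\B_2(h^{[n-2]})$ via Proposition~\ref{prop:fiber}, which you phrase as an induction on $n$ instead of an iteration over $j=n-3,\ldots,0$. The differences are minor. For the forward direction the paper quotients the algebra $A$ directly by $\ell_1,\ldots,\ell_{n-2}$ rather than passing to sections of an ideal in $\B_n(h)$. The paper also obtains positivity of each $h^{[j]}$ from the support of the Hilbert function of the already constructed section, where you use your gap-propagation argument.
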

\begin{proof}
Each $h^{[j]}$ begins with $(1,n-j)$, since taking a positive
difference sends the first two entries $(1,r)$ to $(1,r-1)$ for $r\ge2$.
If $h$ is admissible, choose a quotient $A$ with Hilbert function
$h$ and forms $\ell_1,\ldots,\ell_{n-2}$ realizing $(n-2)$-SLP.
For a strong Lefschetz element $\ell$ on an algebra $B$ with Hilbert
function $g$, maximal rank gives
$\dim_K(B/\ell B)_q=g_q-\min\{g_{q-1},g_q\}=(\Delta g)_q$.
Thus $A/(\ell_1,\ldots,\ell_{n-2})A$ has Hilbert function $h^{[n-2]}$.

Conversely, suppose $h^{[n-2]}$ is an $O$-sequence. Its degree-one
entry is two, so the two-variable case and Lemma~\ref{lem:ginclass}
give an ideal in $\B_2(h^{[n-2]})$. For $j=n-3,\ldots,0$ in turn,
put $r=n-j$ and $g=h^{[j]}$. We have $\Delta g=h^{[j+1]}$, so the
ideal $J$ from the preceding step belongs to $\B_{r-1}(\Delta g)$.
The Hilbert function $\Delta g$ of the standard graded algebra
$R_{r-1}/J$ has support $\{0,\ldots,\tau\}$ for some $\tau$.
Thus $g$ increases strictly through degree $\tau$ and is nonincreasing
thereafter. After its trailing zeros are omitted, $g$ is a sequence of
positive integers. Proposition~\ref{prop:fiber} now gives
an Artinian strongly stable ideal $M\subset R_r$ with section $J$
such that $R_r/M$ has Hilbert function $g$ and strong Lefschetz
element $x_r$. For $3\le s<r$, the remaining Lefschetz conditions
follow from $M\cap R_s=J\cap R_s$. Thus $M\in\B_r(g)$.
At $j=0$ this gives $\B_n(h)\ne\varnothing$, so $h$ is admissible
in $n$ variables by Lemma~\ref{lem:ginclass}.
\end{proof}

\section{The arithmetic criterion and reconstruction}\label{sec:proof}

\subsection{Standard monomials and lower sets of prescribed cardinality}
Let $n\ge2$. For a monomial $u=x_1^{\alpha_1}\cdots x_n^{\alpha_n}$
of degree $q$, define its partial-sum vector by
\[
 \sigma_n(u)=
 \bigl(\alpha_1,\ \alpha_1+\alpha_2,\ \ldots,\
       \alpha_1+\cdots+\alpha_{n-1}\bigr)
 \in\Z_{\ge0}^{n-1}.
\]
For $n=2$ this is the one-entry vector $(\alpha_1)$.
We omit the full sum $\alpha_1+\cdots+\alpha_n=q$,
since it is fixed in degree $q$.
By \eqref{eq:borel}, two monomials $u,v$ of
degree $q$ satisfy $u\preceq v$ if and only if
$\sigma_n(u)\le\sigma_n(v)$ coordinatewise.

\Needspace{7\baselineskip}
Let $g$ be admissible in $n$ variables and fix
$1\le\nu\le\max g$. We define $\Vct_n(g;\nu)\in\Z_{\ge0}^{n-1}$
by recursion on $n$. For $n=2$, set $\Vct_2(g;\nu)=(\nu-1)$.
For $n\ge3$, put $s=\min\{r\ge0:g_r\ge\nu\}$ and define
\[
 \Vct_n(g;\nu)=\bigl(\Vct_{n-1}(\Delta g;\nu-g_{s-1}),\,s\bigr).
\]
Comparing this recursion with \eqref{eq:vectorbase}--\eqref{eq:vector} gives
\[
 \vct_j(\nu)=\Vct_{n-j}(h^{[j]};\nu)
 \qquad (0\le j\le n-2,\quad 1\le\nu\le\max h^{[j]}).
\]

\Needspace{16\baselineskip}
For each $g$ admissible in $n$ variables, the construction of Harima and Wachi
\cite[proofs of Propositions~21 and~24]{HarimaWachi} gives a strongly
stable ideal with quotient Hilbert function $g$. We compute the
partial-sum vectors of its standard monomials.

\begin{lemma}
\label{lem:canonical}
For every admissible sequence $g$ in $n\ge2$ variables there
is an ideal $C_n(g)\in\B_n(g)$, defined recursively below, with
the following property. If its degree-$q$ standard monomials
are listed increasingly as $u_1<\cdots<u_{g_q}$, then
\begin{equation}\label{eq:profile}
 \sigma_n(u_\nu)=\Vct_n(g;\nu)
 \qquad(1\le\nu\le g_q).
\end{equation}
\end{lemma}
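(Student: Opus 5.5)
We define $C_n(g)$ by induction on $n$ and check \eqref{eq:profile} degree by degree.

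\emph{Base case $n=2$.} Let $C_2(g)$ be the ideal whose degree-$q$ standard monomials are the $g_q$ smallest monomials of degree $q$ in degree reverse lexicographic order. These are $x_2^q, x_1x_2^{q-1},\ldots,x_1^{g_q-1}x_2^{q-g_q+1}$, so the $\nu$-th one has $\sigma_2=(\nu-1)=\Vct_2(g;\nu)$. Since $g$ is an $O$-sequence with $g_1\le 2$, we have $g_{q+1}\le g_q+1$ in every degree $q\ge1$. Hence these sets are closed under division and under the move $x_1\to x_2$, so $C_2(g)$ is strongly stable and lies in $\B_2(g)$.

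\emph{Inductive step, $n\ge3$.} Put $J=C_{n-1}(\Delta g)\in\B_{n-1}(\Delta g)$. For each $q\le\tau$, list $E(J)_q$ increasingly as $w_1<\cdots<w_{p_q}$. Choose the flags in Proposition~\ref{prop:fiber} canonically: $D_{q,t}=\{w_1,\ldots,w_{\rho_q(t)}\}$, the first $\rho_q(t)$ elements in revlex order. Initial segments in revlex order are lower sets for $\preceq$, as noted before Proposition~\ref{prop:fiber}, so the proposition gives a strongly stable $M=:C_n(g)$ with section $J$, Hilbert function $g$, and Lefschetz element $x_n$. The Lefschetz conditions for $3\le r<n$ are inherited from $J$, so $M\in\B_n(g)$.

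\emph{Describing the standard monomials.} Fix a degree $t$. Each degree-$t$ standard monomial has the form $w x_n^{t-q}$ with $w\in D_{q,t}$. Its partial-sum vector is $(\sigma_{n-1}(w),\,q)$, because multiplying by $x_n$ leaves the first $n-2$ partial sums unchanged and the $(n-1)$-st partial sum equals $\deg w=q$. Now order these monomials in degree reverse lexicographic order. Monomials with larger $x_n$-exponent $t-q$ are smaller, so the order is first by increasing $q$. Within a fixed $q$, dividing out the common power $x_n^{t-q}$ preserves the order, so it follows the order of the $w$'s.

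The number of such monomials with starting degree at most $q$ is $\sum_{q'\le q}\rho_{q'}(t)=\min\{g_q,g_t\}$. For $q<s$ this equals $g_q$. So the $\nu$-th monomial $u_\nu$ in degree $t$ has starting degree $s=\min\{r:g_r\ge\nu\}$, and it is the $(\nu-g_{s-1})$-th element of $D_{s,t}$. The latter is the $(\nu-g_{s-1})$-th element of $E(J)_s$, because $D_{s,t}$ is an initial segment. By the induction hypothesis applied to $J$ in degree $s$, that element $w$ satisfies $\sigma_{n-1}(w)=\Vct_{n-1}(\Delta g;\nu-g_{s-1})$. Therefore $\sigma_n(u_\nu)=\bigl(\Vct_{n-1}(\Delta g;\nu-g_{s-1}),s\bigr)=\Vct_n(g;\nu)$, as required.

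\emph{Expected obstacle.} The step needing most care is the cumulative count that pins down which starting degree the $\nu$-th monomial uses. It relies on $\rho_{q'}(t)=p_{q'}$ for all $q'<s$ whenever $\nu\le g_t$. This follows from the telescoping identity $\sum_{q'\le q}\rho_{q'}(t)=\min\{g_q,g_t\}$ together with the fact that $g$ is strictly increasing through degree $\tau$. We also need $\nu-g_{s-1}\le p_s$ so that the induction hypothesis applies, and this holds by minimality of $s$.
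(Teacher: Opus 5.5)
Your inductive step is the paper's argument. You make the same canonical choice of $D_{q,t}$ as initial revlex segments of $E(J)_q$. You order the degree-$t$ standard monomials first by starting degree and then by $w$. The telescoping count $\sum_{q'\le q}\rho_{q'}(t)=\min\{g_q,g_t\}$ then locates the starting degree $s$ and the position $\nu-g_{s-1}$, exactly as in the paper.

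\textbf{The gap is in the base case.} The inequality $g_{q+1}\le g_q+1$ does not make the sets $\{x_1^ix_2^{q-i}:0\le i<g_q\}$ closed under division. Dividing $x_1^ix_2^{q-i}$ by $x_2$ requires $i<g_{q-1}$. This fails for $i=g_{q-1}\le q-1$ whenever $g_q=g_{q-1}+1$. For example, $(1,2,2,3)$ satisfies your inequality, yet your recipe makes $x_1^2x_2$ standard while its divisor $x_1^2$ is not. The conclusion is true for genuine $O$-sequences, but not for the reason you give.

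What is actually needed is the sharper two-variable Macaulay bound: $g_{q+1}\le g_q$ whenever $g_q\le q$. Then the function can increase from degree $q$ to $q+1$ only when every monomial of degree $q$ is standard, and division closure follows. Simpler still is the paper's route. By Lemma~\ref{lem:ginclass}, $\B_2(g)\ne\varnothing$. For any $M$ in it, strong stability forces the degree-$q$ standard monomials to be exactly $x_1^ix_2^{q-i}$ with $0\le i<g_q$. This shows directly that your $C_2(g)$ lies in $\B_2(g)$, and is in fact its only element, without appealing to Macaulay's theorem.

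There are two smaller omissions in the inductive step.

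First, $C_{n-1}(\Delta g)$ is only defined once $\Delta g$ is known to be admissible in $n-1$ variables, and you use this without proof. The paper obtains it by passing to $A/\ell_1A$ for a quotient $A$ realizing $(n-2)$-SLP. That algebra has Hilbert function $\Delta g$, is a quotient of $R_{n-1}$, and inherits $(n-3)$-SLP.

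Second, invoking Proposition~\ref{prop:fiber} requires the full flag conditions \eqref{eq:flags}, not only the lower-set property. The nesting and endpoint conditions follow from three facts: $\rho_q(t)=p_q$ for $q\le t\le\tau$, $\rho_q(t)$ is nonincreasing in $t$ afterwards, and $\rho_q(\delta+1)=0$. This is Step~2 of that proof.
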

\begin{proof}
\emph{The case $n=2$.}
Lemma~\ref{lem:ginclass} gives $\B_2(g)\ne\varnothing$.
Take $M\in\B_2(g)$. If $x_1^ix_2^{q-i}$ is standard for $M$ and
$i>0$, then replacing one factor $x_1$ by $x_2$ gives another
standard monomial. Since there are $g_q$ standard monomials in
degree $q$, they are exactly $x_1^ix_2^{q-i}$ with $0\le i<g_q$.
Thus all ideals in $\B_2(g)$ have the same standard monomials in
every degree and hence are equal. Denote this ideal by $C_2(g)$.
The monomials occur in increasing order of $i$, so
$u_\nu=x_1^{\nu-1}x_2^{q-\nu+1}$ and
$\sigma_2(u_\nu)=(\nu-1)=\Vct_2(g;\nu)$.

\Needspace{8\baselineskip}
\smallskip
\noindent\emph{Construction for $n\ge3$.}
Assume the lemma in $n-1$ variables. Put $p=\Delta g$ and
choose an Artinian quotient $A$
with Hilbert function $g$ and forms $\ell_1,\ldots,\ell_{n-2}$
realizing $(n-2)$-SLP. By maximal rank of multiplication by $\ell_1$
between consecutive degrees, $A/\ell_1A$ has Hilbert function $p$.
It is a quotient of $R_{n-1}$ since $p_1=n-1$, and the images
of the remaining forms give $(n-3)$-SLP on this quotient.
Thus $p$ is admissible in $n-1$ variables, and
induction gives $J=C_{n-1}(p)\in\B_{n-1}(p)$.

Set $\tau=\max\supp p$ and $\delta=\max\supp g$. For each
$0\le r\le\tau$, list the monomials of $E(J)_r$ in increasing
degree reverse lexicographic order. For $r\le t\le\delta+1$,
let $D_{r,t}$ consist of the first $\rho_r(t)$ monomials in this list.
Step~2 of the proof of Proposition~\ref{prop:fiber} shows that
these are lower sets satisfying \eqref{eq:flags}. By the bijection
in that proposition, exactly one ideal with section $J$ gives this
family. Denote it by $C_n(g)$.
The quotient $R_n/C_n(g)$ has Hilbert function $g$ and strong
Lefschetz element $x_n$. For $3\le i<n$ we have
$C_n(g)\cap R_i=J\cap R_i$, and $J\in\B_{n-1}(p)$ gives the
remaining Lefschetz conditions. Hence $C_n(g)\in\B_n(g)$.

\smallskip
\noindent\emph{Proof of \eqref{eq:profile}.}
Fix $q\ge0$ and $1\le\nu\le g_q$. By \eqref{eq:lift-proof},
the degree-$q$ standard monomials are $w x_n^{q-r}$ with
$0\le r\le\min\{q,\tau\}$ and $w\in D_{r,q}$. A smaller $r$
gives a larger exponent of $x_n$, so the monomials with smaller
$r$ come first. For fixed $r$, their order is that of $w$ in $E(J)_r$.
By \eqref{eq:rho-local}, the number with $\deg w\le r$ is
\[
 \sum_{a=0}^r\rho_a(q)
 =\sum_{a=0}^r\bigl(\min\{g_a,g_q\}-\min\{g_{a-1},g_q\}\bigr)
 =\min\{g_r,g_q\},
\]
since $g_{-1}=0$. Since $\nu\le g_q$, we have $\min\{g_r,g_q\}\ge\nu$
if and only if $g_r\ge\nu$. Put $s=\min\{r\ge0:g_r\ge\nu\}$.
Since $g_q\ge\nu$ and $g_\tau=\max g$, we have $s\le\min\{q,\tau\}$.
Thus $s$ is the least $r$ for which $\sum_{a=0}^{r}\rho_a(q)\ge\nu$.
Since monomials with smaller $\deg w$ come first, the factor $w$ of
$u_\nu$ has degree $s$. For $s>0$, the number of monomials with
$\deg w<s$ is $\min\{g_{s-1},g_q\}=g_{s-1}$, since $g_{s-1}<\nu\le g_q$.
For $s=0$, there are no such monomials and $g_{-1}=0$.
Among the monomials with $\deg w=s$, the position of $u_\nu$
is therefore $\lambda=\nu-g_{s-1}$, with
\[
 1\le\lambda\le\min\{g_s,g_q\}-g_{s-1}
 =\rho_s(q)\le p_s.
\]
Let $w_\lambda$ be the $\lambda$th monomial of $E(J)_s$.
Since $D_{s,q}$ consists of the first $\rho_s(q)$ monomials of
$E(J)_s$, its $\lambda$th monomial is also $w_\lambda$.
Thus $u_\nu=w_\lambda x_n^{q-s}$. The sum of its exponents
on $x_1,\ldots,x_{n-1}$ is $\deg w_\lambda=s$. By induction
and the definition of $\Vct_n$,
\[
 \sigma_n(u_\nu)
 =\bigl(\sigma_{n-1}(w_\lambda),s\bigr)
 =\bigl(\Vct_{n-1}(p;\lambda),s\bigr)
 =\Vct_n(g;\nu). \qedhere
\]
\end{proof}

\Needspace{14\baselineskip}
With the quotient Hilbert function and the section $J$ fixed,
Proposition~\ref{prop:fiber} reduces uniqueness to deciding whether
$E(J)_q$ has exactly one lower set of each required size $\rho_q(t)$.

\begin{lemma}\label{lem:poset}
Let $(P,\preceq)$ be a finite partially ordered set.
List the elements of $P$ as $u_1,\ldots,u_N$ so that
$u_a\preceq u_b$ implies $a\le b$.
For $0<k<N$, it has a unique lower set of cardinality $k$
if and only if
\begin{equation}\label{eq:cross}
 u_a\preceq u_b\qquad(1\le a\le k<b\le N).
\end{equation}
When unique, the lower set is $\{u_1,\ldots,u_k\}$.
\end{lemma}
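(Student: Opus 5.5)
First observe that $\{u_1,\ldots,u_k\}$ is always a lower set of cardinality $k$. If $u\preceq u_b$ with $b\le k$, then $u=u_a$ for some $a\le b\le k$ by the compatibility of the listing. So the question is only whether some other lower set of size $k$ exists, and uniqueness will force the lower set to be this initial segment, which gives the final sentence of the statement.

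\emph{Sufficiency of \eqref{eq:cross}.} Assume \eqref{eq:cross} and let $D$ be any lower set with $|D|=k$. If $D\ne\{u_1,\ldots,u_k\}$, then, because the cardinalities agree, there are $a\le k<b$ with $u_a\notin D$ and $u_b\in D$. By \eqref{eq:cross} we have $u_a\preceq u_b$, so the lower-set property gives $u_a\in D$, a contradiction. Hence $D=\{u_1,\ldots,u_k\}$.

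\emph{Necessity.} Suppose \eqref{eq:cross} fails, say $u_a\not\preceq u_b$ for some $a\le k<b$. I will build a second lower set of size $k$ that contains $u_b$. Let $L=\{u\in P:u\preceq u_b\}$ be the principal lower set of $u_b$. Every element of $L$ is some $u_c$ with $c\le b$, and $u_a\notin L$. Then I need $|L|\le k$, and here I expect the only real subtlety: $L$ could be too large. To handle this, I will choose the failing pair more carefully. Take $b>k$ minimal in the listing such that $u_b$ is not above all of $u_1,\ldots,u_k$. Every $u_c$ with $k<c<b$ then lies above all of $u_1,\ldots,u_k$. If such a $u_c$ were in $L$, we would get $u_a\preceq u_c\preceq u_b$, which is impossible. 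Hence $L\subseteq\{u_1,\ldots,u_k\}\cup\{u_b\}$ and $L$ misses $u_a$, so $|L|\le k$.

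Now enlarge $L$ to $D=L\cup\{u_c:c\le k,\ u_c\notin L\}$, taking only as many of these $u_c$ as needed, chosen in increasing index order, to reach cardinality $k$. The set $\{u_1,\ldots,u_k\}\setminus\{u_a\}$ has $k-1$ elements, and together with $u_b$ it supplies $k$ candidates. Adding elements $u_c$ with $c\le k$, taken in index order, keeps the set a lower set. The reason is that anything below $u_c$ has index less than $c$ and is therefore either in $L$ or already added. To guarantee that $u_a$ is avoided, I use $\{u_1,\ldots,u_k\}\setminus\{u_a\}$, ordered by index with $u_a$ skipped. For this to work, no element above $u_a$ may be needed, so I pick $a$ maximal among the indices $\le k$ with $u_a\not\preceq u_b$. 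Then the elements $u_c$ with $a<c\le k$ are all below $u_b$ and hence lie in $L$. Elements with $c<a$ are added in index order. Their lower sets avoid $u_a$ because anything below them has smaller index.

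The resulting $D$ is a lower set of cardinality $k$ containing $u_b$, so it differs from $\{u_1,\ldots,u_k\}$. Therefore uniqueness fails, which completes the equivalence.
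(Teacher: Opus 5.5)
Your proof is correct and is essentially the paper's argument: sufficiency is identical, and the set you end up with is exactly $(\{u_1,\ldots,u_k\}\setminus\{u_a\})\cup\{u_b\}$, the same one-element swap that the paper performs with $u^\ast$ maximal in $\{u_1,\ldots,u_k\}$ and $v^\ast$ minimal in its complement. The only difference is that you make the extremal choices through the listing (minimal $b$, then maximal $a$, which forces $u_a$ to be maximal in $\{u_1,\ldots,u_k\}$ and every element strictly below $u_b$ to lie in $\{u_1,\ldots,u_k\}$); the incremental enlargement of $L$ could be replaced by checking directly that this swapped set is a lower set.
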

\begin{proof}
Put $A=\{u_1,\ldots,u_k\}$. If $u_b\in A$ and $u_a\preceq u_b$,
then $a\le b\le k$, so $u_a\in A$. Thus $A$ is a lower set. If \eqref{eq:cross} holds and a
different lower set $D$ has size $k$, choose
$u\in A\setminus D$ and $v\in D\setminus A$. Then $u\preceq v$
contradicts the lower-set property of $D$.

If \eqref{eq:cross} fails, choose $u\in A$ and $v\notin A$
with $u\npreceq v$. Since $P$ is finite, choose a maximal element
$u^\ast$ of $A$ with $u\preceq u^\ast$ and a minimal element
$v^\ast$ of $P\setminus A$ with $v^\ast\preceq v$. We have $u^\ast\npreceq v^\ast$,
since otherwise $u\preceq u^\ast\preceq v^\ast\preceq v$.
Since $u^\ast$ is maximal in $A$, the set $A\setminus\{u^\ast\}$
is a lower set. If $w\preceq v^\ast$ and $w\ne v^\ast$, then
$w\in A$ by minimality of $v^\ast$ in $P\setminus A$. Moreover,
$w\ne u^\ast$ since $u^\ast\npreceq v^\ast$. Thus every such $w$
belongs to $A\setminus\{u^\ast\}$. Hence
$(A\setminus\{u^\ast\})\cup\{v^\ast\}$ is a lower set of size $k$
distinct from $A$.
The only lower sets of sizes $0$ and $N$ are $\varnothing$ and $P$,
respectively.
\end{proof}

\Needspace{12\baselineskip}
\subsection{Proof of Theorem~\ref{thm:main}}
\begin{proof}
By Lemma~\ref{lem:ginclass}, the required uniqueness is
equivalent to $|\B_n(h)|=1$. We argue by induction on $n$.
For $n=2$, the proof of Lemma~\ref{lem:canonical} gives
$\B_2(h)=\{C_2(h)\}$.

\Needspace{9\baselineskip}
Let $n\ge3$ and assume the theorem in $n-1$ variables.
Put $p=\Delta h$, $\tau=\max\supp p$, and $\delta=\max\supp h$.
As in the proof of Lemma~\ref{lem:canonical}, $p$ is admissible in
$n-1$ variables. The section map \eqref{eq:section-map} sends
$\B_n(h)$ onto $\B_{n-1}(p)$, which contains $J=C_{n-1}(p)$.
If $\B_{n-1}(p)=\{J\}$, then every $M\in\B_n(h)$ has section $J$.

We now determine when $J$ is the section of exactly one ideal in
$\B_n(h)$.
For $0\le q\le\tau$, list the monomials of $E(J)_q$ in increasing
degree reverse lexicographic order as $u_1<\cdots<u_{p_q}$.
By Lemma~\ref{lem:canonical}, $\sigma_{n-1}(u_\nu)=\Vct_{n-1}(p;\nu)$
for $1\le\nu\le p_q$.
Proposition~\ref{prop:fiber} and Lemma~\ref{lem:poset}, together with
the Borel comparison \eqref{eq:borel}, show that exactly one
$M\in\B_n(h)$ satisfies $M\cap R_{n-1}=J$ if and only if,
for every $0\le q\le\tau$ and $q\le t\le\delta$ with
$0<\rho_q(t)<p_q$, setting $k=\rho_q(t)$ gives
\begin{equation}\label{eq:one-step-test}
 \max_{1\le\nu\le k} \Vct_{n-1}(p;\nu)
 \le
 \min_{k<\nu\le p_q} \Vct_{n-1}(p;\nu).
\end{equation}
By \eqref{eq:rho-local}, $\rho_q(t)=p_q$ for $q\le t\le\tau$, so the
inequality $\rho_q(t)<p_q$ forces $t>\tau$.

\Needspace{8\baselineskip}
For $n=3$, $\Vct_2(p;\nu)=(\nu-1)$, so every comparison in
\eqref{eq:one-step-test} holds. Since $\B_2(p)=\{J\}$, it follows
that $|\B_3(h)|=1$.

Now let $n\ge4$ and assume \textup{(2)}. Since
$\Delta^i p=h^{[i+1]}$, the comparisons for $1\le j\le n-4$
are exactly those for $p$ in $n-1$ variables. This range is empty
when $n=4$. By the induction hypothesis, $\B_{n-1}(p)=\{J\}$.
For $j=0$, we have $\delta_1=\tau$, $\delta_0=\delta$,
$\rho_{0,q}(t)=\rho_q(t)$, and $\vct_1(\nu)=\Vct_{n-1}(p;\nu)$.
Thus the comparisons \eqref{eq:test} for $j=0$ give
\eqref{eq:one-step-test}, so exactly one ideal in $\B_n(h)$ has
section $J$. Since every ideal in $\B_n(h)$ has that section,
$|\B_n(h)|=1$. This proves $\textup{(2)}\Rightarrow\textup{(1)}$.

Conversely, suppose that \textup{(2)} fails for a triple $(j,q,t)$.
For the Hilbert function $h^{[j]}$ in $n-j$ variables, take
$J=C_{n-j-1}(h^{[j+1]})\subset R_{n-j-1}$ as the section.
By Lemma~\ref{lem:canonical}, the $\nu$th monomial of $E(J)_q$ in
increasing order has partial-sum vector
$\Vct_{n-j-1}(h^{[j+1]};\nu)=\vct_{j+1}(\nu)$.
The failure of \eqref{eq:test}, together with \eqref{eq:borel} and
Lemma~\ref{lem:poset}, therefore gives two distinct lower sets of
size $\rho_{j,q}(t)$ in $E(J)_q$. By \eqref{eq:counts}, this is the
size prescribed by \eqref{eq:rho-local} for $h^{[j]}$.
Step~4 of the proof of Proposition~\ref{prop:fiber} extends each
choice to a family satisfying \eqref{eq:flags}, giving distinct
$M_1,M_2\in\B_{n-j}(h^{[j]})$ with section $J$.
If $j=0$, put $\widetilde M_i=M_i$. If $j>0$, surjectivity of
\eqref{eq:section-map} gives, for each $i$, an ideal in
$\B_{n-j+1}(h^{[j-1]})$ whose intersection with $R_{n-j}$ is $M_i$.
Repeating this step one variable at a time gives
$\widetilde M_i\in\B_n(h)$ with
\[
 \widetilde M_i\cap R_{n-j}=M_i\qquad(i=1,2).
\]
Since $M_1\ne M_2$, these intersections imply
$\widetilde M_1\ne\widetilde M_2$. Both quotients have Hilbert
function $h$ and $(n-2)$-SLP, and each ideal is its own gin by
Lemma~\ref{lem:ginclass}. Thus \textup{(1)} fails whenever
\textup{(2)} fails. This proves the equivalence.
\end{proof}

\begin{remark}[Implementing the test]\label{rem:algorithm}
Fix $0\le j\le n-4$ and put $g=h^{[j]}$. Consider each distinct value
$y=g_t$ with $\delta_{j+1}<t\le\delta_j$ once. For $0\le r\le\delta_{j+1}$,
\eqref{eq:counts} gives $0<\rho_{j,r}(t)<(\Delta g)_r$ exactly when
$g_{r-1}<y<g_r$. Since $g$ increases strictly through $\delta_{j+1}$,
the only possible $r$ is $q=s_j(y)$. If $g_q=y$, no comparison is
needed. Otherwise check \eqref{eq:test} with $k=y-g_{q-1}$ and
$N=g_q-g_{q-1}$. For fixed $j,N$, the successive maxima from the left
and minima from the right need only be computed once.
\end{remark}

\Needspace{14\baselineskip}
\subsection{Reconstruction of the almost revlex ideal}
For $h$ admissible in $n$ variables, the existence of an almost
revlex ideal with quotient Hilbert function $h$ follows from
\cite[Proposition~24]{HarimaWachi}. Pardue \cite[Theorem~4]{Pardue}
characterizes the Hilbert functions admitting such an ideal.
For complete intersection Hilbert functions, Bertone and Cioffi
\cite[Theorem~4.1]{BertoneCioffi} give an explicit construction.
Here we identify $C_n(h)$ with the almost revlex ideal and recover
its standard monomials from the vectors $\vct_0(\nu)$.

\begin{proposition}
\label{prop:reconstruction}
Let $h$ be admissible in $n$ variables. Then $C_n(h)$ is the
unique almost revlex ideal with quotient Hilbert function $h$.
For $1\le\nu\le\max h$, write
\[
 \vct_0(\nu)=\bigl(b_1(\nu),\ldots,b_{n-1}(\nu)\bigr),
 \qquad b_0(\nu)=0.
\]
For each $q\ge0$, the degree-$q$ standard monomials of $C_n(h)$ are
exactly
\begin{equation}\label{eq:reconstruction}
 \left\{
 \left(\prod_{i=1}^{n-1}x_i^{\,b_i(\nu)-b_{i-1}(\nu)}\right)
 x_n^{\,q-b_{n-1}(\nu)}
 :1\le\nu\le h_q
 \right\}.
\end{equation}
If $h$ satisfies the equivalent conditions of
Theorem~\ref{thm:main}, this ideal is the common gin of all
homogeneous ideals whose quotients have Hilbert function $h$
and $(n-2)$-SLP.
\end{proposition}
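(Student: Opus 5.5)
The proof splits into three parts: the explicit description \eqref{eq:reconstruction}, the almost revlex property of $C_n(h)$, and the uniqueness and gin statements.

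\emph{Standard monomials.} Lemma~\ref{lem:canonical} with $g=h$ gives $\sigma_n(u_\nu)=\Vct_n(h;\nu)=\vct_0(\nu)$ for the $\nu$th degree-$q$ standard monomial $u_\nu$ of $C_n(h)$, where $1\le\nu\le h_q$. A monomial of degree $q$ is recovered from its partial-sum vector $(b_1,\ldots,b_{n-1})$. Its exponents are $\alpha_i=b_i-b_{i-1}$ for $i\le n-1$, with $b_0=0$, and $\alpha_n=q-b_{n-1}$. This is exactly \eqref{eq:reconstruction}.

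\emph{Almost revlex property.} I would show that in every degree $q$ the standard monomials of $C_n(h)$ form an initial segment of the degree-$q$ monomials in increasing revlex order. Granting this, let $u$ be a minimal generator and $v>u$ of the same degree. If $v$ were standard, $u$ would be standard by the initial-segment property, a contradiction. So the ideal is almost revlex.

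I would prove the initial-segment claim by induction on $n$ through the construction of $C_n(h)$. The base case $n=2$ is clear, since the standard monomials are $x_1^ix_2^{q-i}$ with $i<h_q$. For $n\ge3$, the degree-$q$ standard monomials are $wx_n^{q-r}$, where $w$ runs over the first $\rho_r(q)$ monomials of $E(J)_r$ and $J=C_{n-1}(\Delta h)$. In revlex order, monomials with a larger $x_n$-exponent are smaller. So the monomials of degree $q$ are ordered by increasing $r=\deg w$, and within fixed $r$ by the revlex order of $w$ in $R_{n-1}$. By induction, $E(J)_r$ is an initial segment of all degree-$r$ monomials of $R_{n-1}$.

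We must check that there are no gaps. Suppose $r<s$ and some $wx_n^{q-s}$ is standard. Then $\rho_s(q)>0$, so $g_q>g_{s-1}\ge g_r$, using that $g$ is strictly increasing through $\tau$. Hence $\rho_r(q)=p_r$ by \eqref{eq:rho-local}. Therefore all of $E(J)_r$ is used in degree $q$, and $E(J)_r$ is all of $(R_{n-1})_r$ exactly when no monomial of degree $r$ lies in $J$. This last point is the main obstacle. The block for $r$ is full inside $E(J)_r$, but for the whole set to be an initial segment we also need the nonstandard monomials of $R_{n-1}$ of degree $r$ to come after everything standard in degree $q$. That fails if $J_r\neq0$ and $s>r$ contributes.

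I therefore expect the correct route to be the generator-wise definition rather than global initial segments. Let $u=wx_n^{q-r}$ be a minimal generator, with $w\in E(J)_r$ or $w\in J$, and let $v>u$ have degree $q$. I would check $v\notin E$ by case analysis on $\deg_{x_n}v$. If $v$ has a smaller $x_n$-exponent, $v=w'x_n^{q-s}$ with $s>r$, and minimality of $u$ forces $\rho_r(q)<p_r$ or $w\in J$. In either case $\rho_s(q)=0$ by the last assertion of Lemma~\ref{lem:counts}. If $v$ has the same $x_n$-exponent, then $v=w'x_n^{q-r}$ with $w'>w$, and the initial-segment choice of $D_{r,q}$, together with the almost revlex property of $J$ from induction, gives $v\notin E$.

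\emph{Uniqueness and the gin.} Uniqueness of the almost revlex ideal is \cite[Remark~11(i)]{HarimaWachi}. Finally, if the conditions of Theorem~\ref{thm:main} hold, then $\B_n(h)=\{C_n(h)\}$, and Lemma~\ref{lem:ginclass} identifies this ideal as the common gin.
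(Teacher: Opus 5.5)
Your proposal is correct and follows the paper's proof. The reconstruction is read off from Lemma~\ref{lem:canonical}. The almost revlex property is proved generator by generator, with the same split on the $x_n$-exponent of $v$: the initial-segment choice of $D_{r,q}$ handles equal exponents, and the last assertion of Lemma~\ref{lem:counts} handles smaller ones. Uniqueness and the gin statement come from Harima--Wachi's Remark~11(i), Theorem~\ref{thm:main}, and Lemma~\ref{lem:ginclass}.

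One step needs repair: the subcase $w\in J$. There Lemma~\ref{lem:counts} does not give $\rho_s(q)=0$. Instead, minimality of $u$ forces $u=w$, since otherwise $u/x_n$ is a multiple of $w\in J$ and lies in $C_n(h)$. So the smaller-exponent case is vacuous, and $w$ is a minimal generator of $J$, which is what you need to apply the almost revlex property of $J$ in the equal-exponent case. This is exactly the paper's split into $a=0$ and $a>0$.
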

\begin{proof}
\emph{Almost revlex identification.}
We prove by induction on $n$ that $C_n(g)$ is almost revlex
for every $g$ admissible in $n$ variables. For $n=2$,
Lemma~\ref{lem:canonical} gives the degree-$q$ standard monomials
$x_1^i x_2^{q-i}$ with $0\le i<g_q$. Since the degree-$q$ monomials
increase with $i$, all those larger than a degree-$q$ monomial of
$C_2(g)$ also belong to $C_2(g)$.
Let $n\ge3$ and put $J=C_{n-1}(\Delta g)$, which is almost
revlex by induction.
Let $u x_n^a$ be a minimal generator, where $x_n\nmid u$,
and let $v x_n^b>u x_n^a$ be a monomial of the same total
degree $d$, with $x_n\nmid v$. By degree reverse lexicographic
comparison, $b\le a$, and if $b=a$, then $v>u$.
If $a=0$, then $b=0$ and
$u$ is a minimal generator of $J$; induction gives
$v\in J\subset C_n(g)$. Suppose $a>0$. Minimality implies
$ux_n^{a-1}\notin C_n(g)$, and hence $u\notin J$.
If $v\in J$, then $vx_n^b\in C_n(g)$. It remains to
consider $v\notin J$. Suppose, for a contradiction, that
$vx_n^b$ is standard.

If $b=a$, then $v\in D_{d-a,d}$. By the construction in the proof of
Lemma~\ref{lem:canonical}, $D_{d-a,d}$ consists of the first
$\rho_{d-a}(d)$ monomials of $E(J)_{d-a}$ in increasing degree reverse
lexicographic order. Since $u<v$ and $v\in D_{d-a,d}$, we have
$u\in D_{d-a,d}$. Thus $ux_n^a$ is standard,
a contradiction. If $b<a$, then
$\deg u=d-a<d-b=\deg v$. The chain starting at $v$
reaches degree $d$. By Lemma~\ref{lem:counts}, every chain
starting in degree $\deg u$ reaches $d$, including the one
starting at $u$. This again makes $ux_n^a$ standard.
Thus every larger monomial of degree $d$ belongs to $C_n(g)$,
which proves that $C_n(g)$ is almost revlex.
By \cite[Remark~11(i)]{HarimaWachi}, $C_n(g)$ is the unique
almost revlex ideal with quotient Hilbert function $g$.

\smallskip
\noindent\emph{Reconstruction and the common gin.}
By Lemma~\ref{lem:canonical}, the $\nu$th standard monomial
$u_\nu=x_1^{\alpha_1}\cdots x_n^{\alpha_n}$ in the increasing
degree-$q$ list satisfies
$\alpha_1+\cdots+\alpha_i=b_i(\nu)$ for $1\le i<n$.
Taking consecutive differences gives
$\alpha_i=b_i(\nu)-b_{i-1}(\nu)$, and the total degree gives
$\alpha_n=q-b_{n-1}(\nu)$. This proves
\eqref{eq:reconstruction}. In particular,
$0=b_0(\nu)\le b_1(\nu)\le\cdots\le b_{n-1}(\nu)\le q$
for $\nu\le h_q$, so all displayed exponents are nonnegative.
When $h_q=0$, the displayed set is empty. Its complement in
each degree specifies the monomials of $C_n(h)$ and therefore
determines the entire ideal.

Since $C_n(h)\in\B_n(h)$, Theorem~\ref{thm:main} and
Lemma~\ref{lem:ginclass} give the common gin assertion
whenever the uniqueness criterion holds.
\end{proof}

The formula describes the ideal $C_n(h)$ for every
admissible $h$, including those for which other gins exist.
Uniqueness among almost revlex ideals is therefore distinct
from the uniqueness tested by Theorem~\ref{thm:main}.

\Needspace{12\baselineskip}
\section{Consequences of Theorem A}\label{sec:consequences}

\subsection{Reduction under quasi-symmetry}
When $h$ is quasi-symmetric, Theorem~\ref{thm:main} requires no
comparison at $j=0$.

\begin{corollary}\label{cor:reduction}
Let $n\ge4$ and let $h$ be quasi-symmetric and admissible in $n$ variables.
Then $h$ has the uniqueness property in $n$ variables if and only if
$\Delta h$ has it in $n-1$ variables. Consequently, if $h^{[j]}$ is quasi-symmetric for
every $0\le j\le n-4$, then $h$ has the uniqueness property in $n$ variables.
\end{corollary}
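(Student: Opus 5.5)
The plan is to read both assertions off condition~(2) of Theorem~\ref{thm:main}, split according to the value of $j$. The conditions indexed by $1\le j\le n-4$ are exactly the conditions for $p=\Delta h$ in $n-1$ variables, because $\Delta^i p=h^{[i+1]}$ and the recursively defined vectors agree: $\vct_{j}(\nu)$ for $h$ equals the corresponding vector for $p$ at index $j-1$. Likewise $\delta_j$ and $\rho_{j,q}(t)$ for $h$ at index $j\ge1$ are the corresponding quantities for $p$ at index $j-1$. This matching was already used in the induction step of the proof of Theorem~\ref{thm:main}. In addition, $p$ is admissible in $n-1$ variables by Proposition~\ref{prop:admissibility}, since $p^{[n-3]}=h^{[n-2]}$. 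Hence the whole statement reduces to showing that quasi-symmetry of $h$ makes the $j=0$ comparisons vacuous. In other words, for every $0\le q\le\delta_1<t\le\delta_0$ we want $\rho_{0,q}(t)\in\{0,h^{[1]}_q\}$.

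To prove this, I would use the formula $\rho_q(t)=\min\{p_q,(h_t-h_{q-1})_+\}$ from the proof of Lemma~\ref{lem:counts}. It shows that $0<\rho_{0,q}(t)<p_q$ holds exactly when $h_{q-1}<h_t<h_q$, which is the observation in Remark~\ref{rem:algorithm}. Since $h$ is strictly increasing through $\delta_1$ and nonincreasing afterwards, the first index $s$ at which $h$ attains its maximum is $\delta_1$. So $t>\delta_1$ means $t>s$. Quasi-symmetry then gives $h_t=h_r$ for some $r\le s$. Because $h_0<h_1<\dots<h_s$ is strictly increasing, no such value can lie strictly between $h_{q-1}$ and $h_q$. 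This rules out $0<k<N$, so condition~(2) at $j=0$ is empty, and the first assertion follows from Theorem~\ref{thm:main} applied to $h$ and to $p$.

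For the consequence, I would argue by induction on $n\ge4$. For $n=4$, quasi-symmetry of $h^{[0]}$ reduces uniqueness to uniqueness of $\Delta h$ in three variables, and that always holds by Theorem~\ref{thm:main}. For $n>4$, the first part reduces uniqueness of $h$ to uniqueness of $p=\Delta h$ in $n-1$ variables. The sequences $p^{[j]}=h^{[j+1]}$ are quasi-symmetric for $0\le j\le n-5$, and $p$ is admissible, so the induction hypothesis applies.

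The main obstacle is only bookkeeping. One must check that the first maximum index $s$ coincides with $\delta_1$, including the edge case $q=0$ with $h_{-1}=0$, where the condition $h_t<h_0=1$ is impossible. One must also check that the index shift between the conditions for $h$ and for $p$ is exact; I would verify this against the recursion \eqref{eq:vector}.
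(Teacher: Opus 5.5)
Your proof is correct and follows the paper's argument. Quasi-symmetry, together with the strict increase of $h$ through $\delta_1$, forces $\rho_{0,q}(t)\in\{0,(\Delta h)_q\}$, so the $j=0$ comparisons in Theorem~\ref{thm:main} are vacuous. The $j\ge1$ comparisons are exactly those for $\Delta h$ in $n-1$ variables, and the consequence follows by iterating down to three variables. Your explicit checks that the first maximum index of $h$ equals $\delta_1$ and that $\Delta h$ is admissible spell out points the paper leaves implicit.
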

\begin{proof}
For $\delta_1<t\le\delta_0$, quasi-symmetry gives
$h_t=h_r$ for some $0\le r\le\delta_1$. Fix $0\le q\le\delta_1$.
Since $h_0,\ldots,h_{\delta_1}$ are strictly increasing, $r<q$
gives $h_t\le h_{q-1}$, while $r\ge q$ gives $h_t\ge h_q$.
Thus \eqref{eq:counts} gives
\[
 \rho_{0,q}(t)=
 \begin{cases}
  0 & \text{if } r<q,\\
  (\Delta h)_q & \text{if } r\ge q.
 \end{cases}
\]
Consequently, the conditions for $j=0$ in
Theorem~\ref{thm:main} impose no restriction. The conditions
for $j\ge1$ are exactly those for $\Delta h$, because
$\Delta^j h=\Delta^{j-1}(\Delta h)$. Repeating this reduction
ends in three variables, where uniqueness holds by
Theorem~\ref{thm:main}.
\end{proof}

Under the hypotheses of Corollary~\ref{cor:reduction}, the section
map \eqref{eq:section-map} is bijective
\cite[Proposition~21(ii)]{HarimaWachi}. This gives the equivalence
in the corollary. Iterating the corollary gives uniqueness, and
Proposition~\ref{prop:reconstruction} identifies the common gin as the
almost revlex ideal. This recovers \cite[Theorem~2]{HarimaWachi}, since
$(n-2)$-SLP and $n$-SLP are equivalent here
\cite[Remark~16]{HarimaWachi}. In three variables, Cho and Park
\cite[Proposition~2.7]{ChoPark} characterize SLP by the almost revlex
property of the gin. Cimpoea\c{s} \cite[Theorem~1.1]{Cimpoeas} shows
that the gin of a three-variable complete intersection with SLP is
almost revlex.

\begin{remark}[Four-variable complete intersections]\label{rem:ci}
Let $R_4/I$ be an Artinian complete intersection of embedding
dimension four with $2$-SLP. Let $d_1,\ldots,d_4$ be the degrees
of a homogeneous regular sequence generating $I$. The Hilbert
series of $R_4/I$ is $\prod_{i=1}^4(1+z+\cdots+z^{d_i-1})$,
so its Hilbert function is symmetric. SLP gives unimodality, hence
the Hilbert function is quasi-symmetric.
Corollary~\ref{cor:reduction} and the three-variable case of
Theorem~\ref{thm:main} show that the Hilbert function of $R_4/I$
has the uniqueness property in four variables.
Proposition~\ref{prop:reconstruction} then identifies $\gin(I)$ with the unique almost revlex ideal having that
quotient Hilbert function, as in \cite[Corollary~27]{HarimaWachi}.
\end{remark}

\Needspace{16\baselineskip}
\subsection{An interval criterion in four variables}
Four is the first embedding dimension in which the comparisons in
Theorem~\ref{thm:main} can fail. The values of $h$ up to its first
maximum determine the intervals below. Uniqueness holds exactly when
none of the subsequent values of $h$ lies in any of these intervals.

\begin{proposition}\label{prop:four}
Let $h=(1,4,h_2,\ldots,h_\delta)$ be admissible in four variables.
Put $p=\Delta h$ and $\tau=\max\supp p$.
For each $0\le q\le\tau$ with $p_q\ge4$, define
\begin{align}
 r_q&=\min\{r\ge0:p_r\ge p_q\},\notag\\
 a_q&=\max_{0\le r<r_q}(p_r-p_{r-1}),\label{eq:interval-data}\\
 b_q&=\min\{p_q-1,\ p_{r_q-1}+a_q-2\}.\notag
\end{align}
Then $h$ has the uniqueness property in four variables if and only if
\begin{equation}\label{eq:interval-test}
 h_t\notin[\,h_{q-1}+3,\ h_{q-1}+b_q\,]\cap\Z
 \quad\text{for every }q\text{ with }p_q\ge4
 \text{ and }\tau<t\le\delta.
\end{equation}
In particular, uniqueness holds whenever $\max p\le3$.
\end{proposition}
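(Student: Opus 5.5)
The plan is to specialize Theorem~\ref{thm:main} to $n=4$. Condition~(2) then involves only $j=0$, with $h^{[1]}=p$ and $\delta_1=\tau$. So uniqueness is equivalent to \eqref{eq:test} holding for all $0\le q\le\tau<t\le\delta$ with $0<k<N$, where $N=p_q$ and $k=\rho_{0,q}(t)$. First I make the vectors explicit. By \eqref{eq:vector} and \eqref{eq:vectorbase},
\[
 \vct_1(\nu)=\bigl(\nu-p_{s(\nu)-1}-1,\ s(\nu)\bigr),\qquad s(\nu)=\min\{r\ge0:p_r\ge\nu\}.
\]
Write $c(\nu)$ for the first coordinate. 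The second coordinate $s(\nu)$ is nondecreasing in $\nu$, so it never obstructs \eqref{eq:test}. The test therefore reduces to the scalar inequality $\max_{\nu\le k}c(\nu)\le\min_{k<\nu\le N}c(\nu)$.

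Next I describe $c$ block by block. For each $r$, the values $\nu$ with $s(\nu)=r$ form the block $p_{r-1}<\nu\le p_r$. On this block $c$ runs through $0,1,\ldots,p_r-p_{r-1}-1$ in increasing order. Since $\nu\le N=p_q$, the largest block that occurs is the one with $s=r_q$, truncated at $p_q$. The earlier blocks, $r<r_q$, give maximal value $a_q-1$ by \eqref{eq:interval-data}. Here $a_q\ge1$ because the block $r=0$ contributes $p_0-p_{-1}=1$.

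I then split into two cases.
\begin{itemize}
\item If $k<p_{r_q-1}$, the start of block $r_q$ lies beyond $k$, so the minimum on the right is $0$. The test then holds iff every $\nu\le k$ is a block start. Since $p_0=1$ and $p_1=3$, the blocks begin $\{1\},\{2,3\},\ldots$, so this happens exactly when $k\le2$.
\item If $p_{r_q-1}\le k<p_q$, the minimum on the right is $c(k+1)=k-p_{r_q-1}$. The maximum on the left is $\max\{a_q-1,\ c(k)\}$, and $c(k)=k-p_{r_q-1}-1$ is harmless. So the test holds iff $k\ge p_{r_q-1}+a_q-1$.
\end{itemize}
Because $a_q\ge1$, the two cases combine into a single statement: the test fails exactly when $3\le k\le\min\{p_q-1,\ p_{r_q-1}+a_q-2\}=b_q$. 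In particular failure needs $p_q\ge4$, which gives the final assertion about $\max p\le3$.

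Finally I translate $k$ back into values of $h$, as in Remark~\ref{rem:algorithm}. Since $h$ increases strictly through $\tau$, \eqref{eq:counts} gives $0<\rho_{0,q}(t)<p_q$ exactly when $h_{q-1}<h_t<h_q$, and then $k=h_t-h_{q-1}$. Otherwise $k\in\{0,p_q\}$ and no comparison is required. Because $b_q\le p_q-1$, the condition $k\in[3,b_q]$ already forces $h_{q-1}<h_t<h_q$. Hence failure is exactly $h_t\in[h_{q-1}+3,\,h_{q-1}+b_q]$, which is \eqref{eq:interval-test}.

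The main obstacle is the careful bookkeeping in the case split: checking the block boundaries, the role of the truncated last block, and that the two failure ranges merge into the single interval $[3,b_q]$ (including the degenerate case $r_q=0$). The rest is direct substitution into Theorem~\ref{thm:main}.
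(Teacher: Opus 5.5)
Your proof is correct and follows essentially the same route as the paper. You reduce to $j=0$, discard the nondecreasing second coordinate, describe the first coordinates block by block, split at $k=p_{r_q-1}$, and translate $k=\rho_{0,q}(t)$ back to $h_t$. The paper differs only cosmetically: it disposes of $p_q\le3$ first, so that $p_{r_q-1}\ge3$ and $a_q\ge2$, whereas you merge the two failure ranges uniformly via $a_q\ge1$. Your block description tacitly uses that $p$ increases strictly through $\delta_2\ge r_q$, which the paper states explicitly. Also, at $k=p_{r_q-1}$ the left maximum is simply $a_q-1$ rather than involving $c(k)=k-p_{r_q-1}-1$, though this does not affect your conclusion.
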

\begin{proof}
In Theorem~\ref{thm:main}, only $j=0$ needs to be checked.
Fix $0\le q\le\tau$ and put $N=p_q$. Write
$\vct_1(\nu)=(\eta_\nu,s_1(\nu))$. The second coordinate
$s_1(\nu)=\min\{r:p_r\ge\nu\}$ is nondecreasing, so its
inequality in \eqref{eq:test} always holds. The first
coordinate is
$\eta_\nu=\nu-p_{s_1(\nu)-1}-1$ by \eqref{eq:vectorbase}--\eqref{eq:vector}.
Since $\Delta p=h^{[2]}$ has support $\{0,\ldots,\delta_2\}$,
$p$ increases strictly through $\delta_2$ and is nonincreasing thereafter.
Put $s=s_1(N)$. As $N=p_q\le p_{\delta_2}$, we have $s\le\delta_2$.
For $1\le\nu\le N$ and $0\le r\le s$, it follows that
\[
 s_1(\nu)=r\quad\Longleftrightarrow\quad
 p_{r-1}<\nu\le\min\{p_r,N\}.
\]
For each $r$, subtracting $p_{r-1}+1$ from these consecutive indices
gives the first coordinates $(0,1,\ldots,c_r-1)$, where
$c_r=\min\{p_r,N\}-p_{r-1}$. Taking these lists in order from $r=0$
to $r=s$ gives $(\eta_1,\ldots,\eta_N)$.
If $N\le3$, then $p_0=1$ and $p_1=3$ show that this list
consists of the first $N$ entries of $(0,0,1)$.
Thus \eqref{eq:test} holds for every $0<k<N$.

Suppose $N\ge4$. Then $s=r_q$ by \eqref{eq:interval-data}.
Put $P=p_{s-1}$ and $a=a_q$.
The lists are indexed by $0,\ldots,s$.
For $r<s$, their lengths are $p_r-p_{r-1}$;
the last list has length $N-P$. The first two lists are
$(0)$ and $(0,1)$; in particular, $s\ge2$, $P\ge3$,
and $a\ge2$.

For $0<k<N$, it remains to determine when
\[
 \max_{1\le\nu\le k}\eta_\nu
 \le\min_{k<\nu\le N}\eta_\nu.
\]
For $k=1,2$, the left side is zero and all entries on the
right are nonnegative. Suppose $k>2$. Then $\eta_3=1$
occurs among the first $k$ entries. If $k<P$, the initial
zero of a later list occurs among the remaining entries,
so the inequality fails. We may therefore assume $k\ge P$
and write $k=P+u$, where $0\le u<N-P$.
The remaining entries are $u,u+1,\ldots,N-P-1$.
The largest entry in the preceding complete lists is $a-1$;
the entries already used in the last list are
$0,\ldots,u-1$ when $u>0$. The condition is therefore
\[
 \max\{a-1,u-1\}\le u
 \quad\Longleftrightarrow\quad k\ge P+a-1.
\]
Thus the values of $k$ for which the inequality fails are exactly
$3\le k\le\min\{N-1,P+a-2\}=b_q$.
Finally,
\[
 \rho_{0,q}(t)=\min\{p_q,(h_t-h_{q-1})_+\}.
\]
Since $3\le b_q\le p_q-1$, one has
$3\le\rho_{0,q}(t)\le b_q$ if and only if
$h_{q-1}+3\le h_t\le h_{q-1}+b_q$.
This proves \eqref{eq:interval-test}.
\end{proof}

Since $b_q<p_q=h_q-h_{q-1}$ for $p_q\ge4$, the corresponding
interval in \eqref{eq:interval-test} is contained in $(h_{q-1},h_q)$.
These intervals are pairwise disjoint because $h$ is strictly increasing
through degree $\tau$.

\Needspace{8\baselineskip}
\begin{example}\label{ex:four}
For integers $\ell\ge1$ and $11\le c\le19$, set
\[
 h^{(\ell,c)}=(1,4,10,20,
                  \underbrace{c,\ldots,c}_{\ell\text{ terms}}).
\]
Its positive difference is $p=(1,3,6,10)$, realized by
$K[x_1,x_2,x_3]/(x_1,x_2,x_3)^4$.
Multiplication by any power of $x_3$ is injective whenever
the target degree is at most three, and has zero target
otherwise, so this three-variable quotient has SLP.
Proposition~\ref{prop:fiber} therefore shows that every $h^{(\ell,c)}$
is admissible in four variables.

The only indices with $p_q\ge4$ are $q=2,3$. Their data are
\[
 (r_2,a_2,b_2)=(2,2,3),\qquad
 (r_3,a_3,b_3)=(3,3,7).
\]
Thus the forbidden intervals in Proposition~\ref{prop:four} are
$[7,7]$ and $[13,17]$. Since $11\le c\le19$, the criterion gives
\begin{equation}\label{eq:example-classification}
 h^{(\ell,c)}\text{ has the uniqueness property in four variables}
 \Longleftrightarrow c\in\{11,12,18,19\}.
\end{equation}
In particular, $h^{(\ell)}:=h^{(\ell,18)}$ has the
uniqueness property for every $\ell\ge1$, although it is not
quasi-symmetric: $18$ does not occur on $(1,4,10,20)$.

For $g=h^{(1,15)}=(1,4,10,20,15)$, every $M\in\B_4(g)$ has section
$J=(x_1,x_2,x_3)^4$, since $p$ counts all monomials of $R_3$ through
degree three and is zero thereafter. Likewise, $g$ forces all monomials
of $R_4$ of degree at most three to be standard and all of degree at
least five to belong to $M$. In degree four,
\eqref{eq:rho-local} gives $\rho_q(4)=p_q$ for $0\le q\le2$ and
$\rho_3(4)=15-10=5$. Thus for $0\le q\le2$ and $u\in E(J)_q$,
the monomial $ux_4^{4-q}$ must be standard. For $q=3$,
choose one of the sets
\[
 \begin{aligned}
 D&=\{x_3^3,x_2x_3^2,x_1x_3^2,x_2^2x_3,x_1x_2x_3\},\\
 D'&=(D\setminus\{x_1x_2x_3\})\cup\{x_2^3\}.
 \end{aligned}
\]
Indeed, by \eqref{eq:borel}, $D$ consists of the degree-three monomials
$u\preceq x_1x_2x_3$, and $D'$ consists of those with
$u\preceq x_1x_3^2$ or $u\preceq x_2^3$. Thus both are lower
sets of size five. To see that they are the only ones, let
$L\subseteq E(J)_3$ be a lower set of size five. If $x_1x_3^2\notin L$,
then \eqref{eq:borel} allows only the four degree-three monomials in
$x_2,x_3$. If $x_2^2x_3\notin L$, then every $x^\alpha\in L$ satisfies
$\alpha_1+\alpha_2\le1$, leaving only three possibilities.
Thus $L$ contains both $x_1x_3^2$ and $x_2^2x_3$, and hence all four
monomials in $D\cap D'$. By \eqref{eq:borel}, every monomial of $E(J)_3$
outside $D\cup D'$ lies strictly above $x_1x_2x_3$. Including any such
monomial would also require $x_1x_2x_3$, giving at least six elements.
The fifth monomial is therefore either $x_1x_2x_3$ or $x_2^3$.

For $u\in E(J)_3$, take $ux_4$ to be standard exactly when $u$
belongs to the chosen set.
By Proposition~\ref{prop:fiber}, each choice gives a strongly stable
ideal whose quotient has Hilbert function $g$ and strong Lefschetz
element $x_4$. Since $R_3/J$ has SLP, both quotients have $2$-SLP.
The monomial $x_1x_2x_3x_4$ is standard for the first ideal and
belongs to the second. By Lemma~\ref{lem:ginclass}, these are exactly
the two gins that occur for $g$ under $2$-SLP.
\end{example}

\Needspace{12\baselineskip}
\subsection{Beyond the quasi-symmetry hypotheses}
For a finite sequence of positive integers $g=(g_0,\ldots,g_e)$
with $g_0=1$ and $e\ge1$, put $P_i=\sum_{r=0}^i g_r$ for
$0\le i\le e$ and define
\begin{equation}\label{eq:symmetric-lift}
 \mathcal S(g)=(P_0,\ldots,P_e,P_{e-1},\ldots,P_0).
\end{equation}
Since $P_0=g_0$ and $P_i-P_{i-1}=g_i>0$ for $1\le i\le e$,
$\mathcal S(g)$ increases strictly to $P_e$ and then takes the
preceding values in reverse order. Thus $\mathcal S(g)$ is
quasi-symmetric and $\Delta\mathcal S(g)=g$.

Let $n\ge4$ and let $g$ be admissible in $n-1$ variables.
By Proposition~\ref{prop:admissibility}, $\Delta^{n-3}g$ is an
$O$-sequence. Since $\mathcal S(g)_1=1+g_1=n$ and
$\Delta^{n-2}\mathcal S(g)=\Delta^{n-3}g$, the same proposition
shows that $\mathcal S(g)$ is admissible in $n$ variables.
By Corollary~\ref{cor:reduction},
$\mathcal S(g)$ has the uniqueness property in $n$ variables
if and only if $g$ has it in $n-1$ variables.

\Needspace{6\baselineskip}
\begin{proposition}\label{prop:families}
For every \mbox{$n\ge4$}, there are infinitely many Hilbert functions $h$
admissible in $n$ variables such that $h$ has the uniqueness property,
$h^{[j]}$ is symmetric for every $0\le j<n-4$, and $h^{[n-4]}$ is
not quasi-symmetric.
\end{proposition}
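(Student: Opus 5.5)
Start from the four-variable family of Example~\ref{ex:four}: $h^{(\ell)}=h^{(\ell,18)}=(1,4,10,20,18,\ldots,18)$ with $\ell$ trailing terms. It is admissible in four variables and has the uniqueness property. It is not quasi-symmetric, because $18$ does not occur among $(1,4,10,20)$. Distinct $\ell$ give distinct sequences, so this is an infinite family. This settles $n=4$, where the symmetry condition for $0\le j<0$ is empty.

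For $n\ge5$, iterate the symmetric lift \eqref{eq:symmetric-lift}. Set $h=\mathcal S^{\,n-4}(h^{(\ell)})$. By the paragraph preceding the proposition, if $g$ is admissible in $m-1$ variables with $m\ge4$, then $\mathcal S(g)$ is admissible in $m$ variables. Moreover, $\mathcal S(g)$ has the uniqueness property in $m$ variables exactly when $g$ has it in $m-1$ variables; this uses Corollary~\ref{cor:reduction} and the quasi-symmetry of $\mathcal S(g)$. Inducting from $m=5$ up to $m=n$ gives that $h$ is admissible in $n$ variables and has the uniqueness property. The lift $\mathcal S$ is injective, since $\Delta\mathcal S(g)=g$, so distinct $\ell$ still give distinct $h$.

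Next identify the iterates. Since $\Delta\mathcal S(g)=g$, we get $h^{[j]}=\Delta^j h=\mathcal S^{\,n-4-j}(h^{(\ell)})$ for $0\le j\le n-4$. In particular $h^{[n-4]}=h^{(\ell)}$, which is not quasi-symmetric. For $j<n-4$, $h^{[j]}$ has the form $\mathcal S(g)$ with $g=\mathcal S^{\,n-5-j}(h^{(\ell)})$. The lift $\mathcal S(g)=(P_0,\ldots,P_e,P_{e-1},\ldots,P_0)$ is symmetric by construction: reading it backwards gives the same sequence. One small bookkeeping check is that the iterates in \eqref{eq:iterates} agree with $\Delta^j$ applied to sequences with trailing zeros omitted; this holds since all entries of $\mathcal S(g)$ are positive.

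The main point needing care is the hypotheses of the lifting step at each stage: the input must be a finite sequence of positive integers with $g_0=1$ and $e\ge1$, and it must be admissible. These hold for $h^{(\ell)}$. Each lift has first entry $P_0=1$, positive entries, and length at least $2$, and admissibility propagates as stated. So I expect no genuine obstacle; the argument is a short induction built on Example~\ref{ex:four} and the remarks on $\mathcal S$.
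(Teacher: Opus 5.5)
Your proposal is correct and follows essentially the same route as the paper's proof: apply $\mathcal S$ exactly $n-4$ times to $h^{(\ell)}=h^{(\ell,18)}$ from Example~\ref{ex:four}, and use the remarks preceding the proposition for admissibility and uniqueness. The remaining steps also match the paper: the identity $\Delta\mathcal S(g)=g$ gives $h^{[j]}=\mathcal S^{\,n-4-j}(h^{(\ell)})$, which yields symmetry for $j<n-4$, non-quasi-symmetry at $j=n-4$, and distinctness for distinct $\ell$.
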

\begin{proof}
Apply $\mathcal S$ exactly $n-4$ times to each $h^{(\ell)}$ from
Example~\ref{ex:four}. The resulting Hilbert function $h$ is
admissible in $n$ variables and has the uniqueness property.
Since $\Delta\mathcal S(g)=g$, for $0\le j\le n-4$ we have
\[
 h^{[j]}=\mathcal S^{\,n-4-j}(h^{(\ell)}),
\]
where $\mathcal S^0$ is the identity. For $j<n-4$, the right-hand
side is symmetric by \eqref{eq:symmetric-lift}. For $j=n-4$ it is
$h^{(\ell)}$, which is not quasi-symmetric. Distinct values of $\ell$
give distinct $h^{[n-4]}$ and therefore distinct $h$.
\end{proof}

\Needspace{9\baselineskip}
\begin{example}
Consider the two symmetric sequences
\[
 \begin{aligned}
 H&=(1,5,15,35,53,35,15,5,1),\\
 G&=(1,5,15,35,50,35,15,5,1).
 \end{aligned}
\]
Their positive differences are
\[
 \Delta H=(1,4,10,20,18),\qquad \Delta G=(1,4,10,20,15).
\]
By Example~\ref{ex:four}, $\Delta H$ and $\Delta G$ are admissible in
four variables. Formula~\eqref{eq:symmetric-lift} gives
$H=\mathcal S(\Delta H)$ and $G=\mathcal S(\Delta G)$, so $H$ and $G$
are admissible in five variables.
By Corollary~\ref{cor:reduction} and \eqref{eq:example-classification},
$H$ has the uniqueness property in five variables. Since $G$ is
quasi-symmetric, its section map \eqref{eq:section-map} is bijective
by \cite[Proposition~21(ii)]{HarimaWachi}. The two gins for $\Delta G$
in Example~\ref{ex:four} therefore give exactly two gins for $G$.
For both $H$ and $G$, no comparison is required at $j=0$, while the
comparison at $(j,q,t)=(1,3,4)$ holds for $H$ ($N=10$, $k=8$)
and fails for $G$ ($N=10$, $k=5$).
\end{example}

\Needspace{6\baselineskip}
\begin{example}\label{ex:five}
Let $h_+=(1,5,15,7)$ and $h_-=(1,5,15,8)$. Their common positive
difference $p=(1,4,10)$ counts all monomials of $R_4$ through degree
two, so the only possible section is $J=(x_1,\ldots,x_4)^3$.
For $r=3,4$, multiplication by each power of $x_r$ on
$R_r/(J\cap R_r)$ is injective in target degrees at most two and has
zero target otherwise. Thus $\B_4(p)=\{J\}$, and lifting $J$ by
Proposition~\ref{prop:fiber} shows that both functions are admissible
in five variables.

For $h\in\{h_+,h_-\}$, we have $\delta_0=3$ and $\delta_1=\delta_2=2$, so
no comparison is required at $j=1$. At $j=0$, $h_1<h_3<h_2$ gives
$(q,t)=(2,3)$ by Remark~\ref{rem:algorithm}. Here $N=10$ and
$k=h_3-5$, which equals $2$ for $h_+$ and $3$ for $h_-$.

In increasing degree reverse lexicographic order, $E(J)_2$ begins with
\[
 x_4^2,\quad x_3x_4,\quad x_2x_4,\quad x_1x_4,\quad x_3^2.
\]
Every monomial after the first two
lies above $x_3x_4$ in the Borel order, so $F=\{x_4^2,x_3x_4\}$ is
the unique lower set of size two. Outside $F$, the multiples of $x_4$
are all $\succeq x_2x_4$, and the monomials in $R_3$ are all
$\succeq x_3^2$. The third and fifth monomials have vectors
$\sigma_4(x_2x_4)=(0,1,1)$ and $\sigma_4(x_3^2)=(0,0,2)$, so
\eqref{eq:test} fails in the second coordinate for $k=3$.
These monomials are incomparable, and the only lower sets of size three
are $F\cup\{x_2x_4\}$ and $F\cup\{x_3^2\}$.
For $(q,t)\ne(2,3)$, \eqref{eq:rho-local} forces $D_{q,t}$ to be
empty or equal to $E(J)_q$. By
Proposition~\ref{prop:fiber} and Lemma~\ref{lem:ginclass},
exactly one gin occurs for $h_+$ and exactly two for $h_-$ among
five-variable quotients with $3$-SLP.
\end{example}

\Needspace{10\baselineskip}

\end{document}